\newtheorem{theorem}{Theorem}[section]
\newtheorem{lemma}[theorem]{Lemma}
\newtheorem{proposition}[theorem]{Proposition}
\theoremstyle{definition}
\newtheorem{definition}[theorem]{Definition}
\newtheorem{example}[theorem]{Example}
\theoremstyle{remark}
\numberwithin{equation}{section}
\begin{document}
	\setcounter{page}{1}
	\title{Continuous relay fusion frame in Hilbert spaces}
	
	\author{Fakhr-dine Nhari$^{1}$ and Mohamed Rossafi$^{2*}$}

	\address{$^{1}$Laboratory Analysis, Geometry and Applications Department of Mathematics, Faculty Of Sciences, University of Ibn Tofail, Kenitra, Morocco}
	\email{\textcolor[rgb]{0.00,0.00,0.84}{nharidoc@gmail.com}}
	
	\address{$^{2}$LASMA Laboratory Department of Mathematics Faculty of Sciences, Dhar El Mahraz University Sidi Mohamed Ben Abdellah, Fes, Morocco}
    \email{\textcolor[rgb]{0.00,0.00,0.84}{rossafimohamed@gmail.com; mohamed.rossafi@usmba.ac.ma}}

	\subjclass[2010]{Primary 41A58; Secondary 42C15.}
	
	\keywords{continous Frame, continuous fusion frame, continuous relay fusion frame, Hilbert space.}
	
	\date{
		\newline \indent $^{*}$Corresponding author}
	
	\begin{abstract}
	In this paper we introduced the concept of continuous relay fusion frames in Hilbert spaces. And we define the dual frames for continuous relay fusion frames. Finally we study the perturbation probleme of continuous relay fusion frames.
	\end{abstract} \maketitle
	
	\section{introduction}
		The concept of frames in Hilbert spaces has been introduced by Duffin and Schaffer \cite{Duf} in 1952 to study some deep problems in nonharmonic Fourier series, after the fundamental paper \cite{DGM} by Daubechies, Grossman and Meyer, frame theory began to be widely used, particularly in the more specialized context of wavelet frames and Gabor frames.
		
		Continuous frames were proposed by G. Kaiser \cite{KAI} and independently  by Ali, Antoine, and Gazeau \cite{GAZ} to a family indexd by some locally compact space endowed by a Radon measure. Gabrado and Han \cite{GHN} called these frames as the ones associated with measurable spaces.
		
		Let $H, L$ be separable Hilbert spaces and let $B(H,L)$ be the space of all the bounded linear operators from $H$ to $L$. (if $H=L$ we write $B(H)$), let  $(\Omega, \mu)$ a positive measure space.
		
		If $W\subseteq H$ and $V\subseteq L$ are subspaces, then we let $\pi_{W}\in B(H)$ and $P_{V}\in B(L)$ denote the orthogonal projections onto the subspaces $W$ and $V$, respectively.
		
		In this section we briefly recall the definitions of continuous frames and continuous fusion frames in Hilbert space. 
	\begin{definition}\cite{RND}
		Let $H$ be a complex Hilbert space and $(\Omega,\mu)$ be a measure space with positive measure $\mu$. A mapping $F:\Omega\rightarrow H$ is called a continuous frame with respect to $(\Omega, \mu)$, if 
		\begin{itemize}
			\item[(1)] $F$ is weakly-measurable, i.e, for all $f\in H$, $w\rightarrow \langle f,F(w)\rangle$ is a measurable function on $\Omega$.
			\item[(2)] there exist constants $A,B>0$ such that 
			\begin{equation}\label{eq2}
				A\|f\|^{2}\leq \int_{\Omega}|\langle f,F(w)\rangle |^{2}d\mu(w)\leq B\|f\|^{2},\quad \forall f\in H.
			\end{equation}
		\end{itemize}
	The constants $A$ and $B$ are called continuous frame bounds. $F$ is called a tight continuous frame if $A=B$. The mapping $F$ is called bessel if the second inequality in \eqref{eq2} holds. In the case, $B$ is called the bessel constant. If $\mu$ is a counting measure and $\Omega=\mathbb{N}$, $F$ is called a discrete frame.
	\end{definition}
	\begin{definition}\cite{MFA}
		Let $\{W_{w}\}_{w\in\Omega}$ be a family of closed subspaces of Hilbert space $H$ and $(\Omega,\mu)$ be a measure space with positive measure $\mu$ and $\nu:\Omega \rightarrow \mathbb{R}^{+}$. Then $\{W_{w},\nu_{w}\}_{w\in\Omega}$ is called a continuous fusion frame with respect to $(\Omega,\mu)$ and $\nu$, if 
		\begin{itemize}
			\item[(1)] for each $f\in H$, $\{\pi_{W_{w}}f\}_{w\in\Omega}$ is strongly measurable and $\nu$ is a measurable function from $\Omega$ to $\mathbb{R}^{+}$.
			\item[(2)] there are two constants $0<C,D<\infty$ such that 
			\begin{equation*}
				C\|f\|^{2}\leq \int_{\Omega}\nu_{w}^{2}\|\pi_{W_{w}}f\|^{2}d\mu(w)\leq D\|f\|^{2},\quad\forall f\in H.
			\end{equation*}
		\end{itemize}
	Where $\pi_{W_{w}}$ is the orthogonal projection onto the subspace $W_{w}$. We call $C$ and $D$ the lower and upper continuous fusion frame bounds, respectively.
	\end{definition}

	\section{continuous relay fusion frame in Hilbert spaces}
	
	\begin{definition}
		Let $\{K_{w}\}_{w\in\Omega}$ be a sequence of separable Hilbert spaces and $\{W_{w}\}_{w\in\Omega}$ be a family of closed subspaces in $H$ for each $w\in\Omega$. Let $\{V_{w,v}\}_{v\in\Omega_{w}}$ be a family of closed subspaces in $K_{w}$. Let $\{\alpha_{w,v}\}_{w\in\Omega,v\in\Omega_{w}}$ be a family of weights, i.e $\alpha_{w,v}>0$ for each $w\in\Omega$, $v\in\Omega_{w}$, and let $\Lambda_{w}\in B(H,K_{w})$ for each $w\in\Omega$. Then $\{W_{w},V_{w,v},\nu_{w,v}\}_{w\in\Omega,v\in\Omega_{w}}$ is said to be a continuous relay fusion frame, if 
		\begin{itemize}
			\item[(1)] for each $f\in H$, $\{\Lambda_{w}f\}_{w\in\Omega}$ is strongly measurable.
			\item[(2)]  for each $f\in H$, $\{\pi_{W_{w}}f\}_{w\in\Omega}$ is strongly measurable.
			\item[(3)] for each $f\in K_{w}$, $\{P_{V_{w,v}}f\}_{w\in\Omega,v\in\Omega_{w}}$ is strongly measurable.	
			\item[(4)] 	there exist constants $0<A\leq B<\infty$ such that 
			\begin{equation}\label{eq1}
				A\|f\|^{2}\leq \int_{\Omega}\int_{\Omega_{w}}\alpha_{w,v}^{2}\|P_{V_{w,v}}\Lambda_{w}\pi_{W_{w}}f\|^{2}d\mu(v) d\mu(w)\leq B\|f\|^{2}, \quad\forall f\in H.
			\end{equation}
		\end{itemize}
		Where $P_{V_{w,v}}$ is the orthogonal projection onto the subspace $V_{w,v}$. We call $A$ and $B$ the lower and upper continuous relay fusion frame bounds, respectively.	
	
	\end{definition}
	
	\begin{definition}
		$$\biggl(\int_{\Omega}\int_{\Omega_{w}}\oplus V_{w,v}\biggr)_{l^{2}}=\biggl\{\{f_{w,v}\}_{w\in\Omega,v\in\Omega_{w}}, f_{w,v}\in V_{w,v}, \int_{\Omega}\int_{\Omega_{w}}\|f_{w,v}\|^{2}<\infty\biggr\},$$ 
		
		with inner product given by 
		\begin{equation*}
			\langle \{f_{w,v}\}_{w,v},\{g_{w,v}\}_{w,v}\rangle=\int_{\Omega}\int_{\Omega_{w}}\langle f_{w,v},g_{w,v}\rangle d\mu(w)d\mu(v).
		\end{equation*}
	With respect to the pointwise operations is a Hilbert space.
	\end{definition}
	\begin{lemma}\label{lem1}
		Let $\mathcal{R}$ be a bessel relay-fusion sequence in $H$ with bessel bound $B$. Then, for each sequence $\{f_{w,v}\}_{w\in\Omega,v\in\Omega_{w}}$ with $f_{w,v}\in V_{w,v}$, 
		\begin{equation*}
			\int_{\Omega}\int_{\Omega_{w}}\alpha_{w,v}\pi_{W_{w}}\Lambda_{w}^{\ast}f_{w,v}d\mu(v)d\mu(w),
		\end{equation*}
	converges.
	\end{lemma}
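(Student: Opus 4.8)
The plan is to realize the integral as an element of $H$ through a duality argument, exactly as one does for the synthesis operator of a fusion frame. First I would fix $f=\{f_{w,v}\}_{w\in\Omega,v\in\Omega_w}$ in $\bigl(\int_\Omega\int_{\Omega_w}\oplus V_{w,v}\bigr)_{l^2}$ and, for an arbitrary $g\in H$, examine the scalar-valued assignment
\begin{equation*}
g\longmapsto \int_\Omega\int_{\Omega_w}\alpha_{w,v}\langle f_{w,v},P_{V_{w,v}}\Lambda_w\pi_{W_w}g\rangle\, d\mu(v)\, d\mu(w),
\end{equation*}
which is the formal pairing of $g$ with the integral in question after moving $\pi_{W_w}\Lambda_w^{\ast}$ across the inner product and inserting $P_{V_{w,v}}$ (legitimate since $f_{w,v}\in V_{w,v}$ and $P_{V_{w,v}}$ is self-adjoint). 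The integrand is measurable by hypotheses (1)--(3) of the definition of a continuous relay fusion frame, so it remains only to check absolute integrability; applying the Cauchy--Schwarz inequality pointwise in $v$ and $w$ and then once more in $L^2(\Omega\times\Omega_w,\mu\times\mu)$ yields
\begin{equation*}
\Bigl|\int_\Omega\int_{\Omega_w}\alpha_{w,v}\langle f_{w,v},P_{V_{w,v}}\Lambda_w\pi_{W_w}g\rangle\, d\mu(v)\, d\mu(w)\Bigr|\le \Bigl(\int_\Omega\int_{\Omega_w}\|f_{w,v}\|^2\, d\mu(v)\, d\mu(w)\Bigr)^{1/2}\Bigl(\int_\Omega\int_{\Omega_w}\alpha_{w,v}^2\|P_{V_{w,v}}\Lambda_w\pi_{W_w}g\|^2\, d\mu(v)\, d\mu(w)\Bigr)^{1/2}.
\end{equation*}
By the Bessel bound the second factor is at most $\sqrt{B}\,\|g\|$, so the map above is a bounded linear functional on $H$ of norm at most $\sqrt{B}\bigl(\int_\Omega\int_{\Omega_w}\|f_{w,v}\|^2\bigr)^{1/2}$. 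By the Riesz representation theorem there is a unique $h\in H$ representing it, and this $h$ is by definition the value of $\int_\Omega\int_{\Omega_w}\alpha_{w,v}\pi_{W_w}\Lambda_w^{\ast}f_{w,v}\, d\mu(v)\, d\mu(w)$; hence the integral converges in the weak sense.

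To see that it also converges as a genuine limit of Bochner integrals, I would fix an increasing exhaustion $(\Omega^{(n)})_n$ of $\Omega$ by sets of finite measure (and similarly exhaust each $\Omega_w$), set $h_n=\int_{\Omega^{(n)}}\int_{\Omega_w^{(n)}}\alpha_{w,v}\pi_{W_w}\Lambda_w^{\ast}f_{w,v}\, d\mu(v)\, d\mu(w)$, and estimate $\|h_n-h_m\|=\sup_{\|g\|\le 1}|\langle h_n-h_m,g\rangle|$ by the same Cauchy--Schwarz computation with the region of integration replaced by the relevant symmetric difference. Since $\int_\Omega\int_{\Omega_w}\|f_{w,v}\|^2\, d\mu(v)\, d\mu(w)<\infty$, these tails tend to $0$, so $(h_n)$ is Cauchy in $H$ and its limit coincides with the $h$ produced above.

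The part that requires the most care is the measure-theoretic bookkeeping: one must verify that $w\mapsto \int_{\Omega_w}\alpha_{w,v}\pi_{W_w}\Lambda_w^{\ast}f_{w,v}\, d\mu(v)$ is a well-defined strongly measurable $H$-valued function so that the iterated integral is meaningful, handle cleanly the fact that the inner index set $\Omega_w$ varies with $w$, and justify the Fubini interchange used in the scalar estimate. All of this is routine given the strong-measurability hypotheses (1)--(3), the uniform boundedness of the orthogonal projections and of the $\Lambda_w$, and the fact that weak measurability into a separable Hilbert space implies strong measurability; I do not anticipate a genuine obstacle, but this is where the technical attention must go.
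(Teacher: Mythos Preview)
Your proposal is correct and follows essentially the same approach as the paper: pair the integral against an arbitrary $h\in H$, move $\pi_{W_w}\Lambda_w^{\ast}$ across the inner product (using $f_{w,v}=P_{V_{w,v}}f_{w,v}$), apply Cauchy--Schwarz, and invoke the Bessel bound to obtain the estimate $\sqrt{B}\,\|f\|$. The paper's version is terser---it simply writes $\|g\|=\sup_{\|h\|=1}|\langle g,h\rangle|$ and bounds that, without explicitly invoking Riesz or addressing Bochner convergence and measurability---so your added care on those points is a refinement rather than a different route.
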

	\begin{proof}
		Let $f=\{f_{w,v}\}_{w\in\Omega_{w},v\in\Omega_{w}}\in \biggl(\int_{\Omega}\int_{\Omega_{w}}\oplus V_{w,v}\biggr)_{l^{2}}$ and $g=\int_{\Omega}\int_{\Omega_{w}}\alpha_{w,v}\pi_{W_{w}}\Lambda_{w}^{\ast}f_{w,v}d\mu(v)d\mu(w)$.
		Then we have 
		\begin{align*}
			\|g\|&=\|\int_{\Omega}\int_{\Omega_{w}}\alpha_{w,v}\pi_{W_{w}}\Lambda_{w}^{\ast}f_{w,v}d\mu(w)d\mu(v)\|\\&=\sup_{\|h\|=1}\bigg|\langle \int_{\Omega}\int_{\Omega_{w}}\alpha_{w,v}\pi_{W_{w}}\Lambda_{w}^{\ast}f_{w,v}d\mu(w)d\mu(v),h\rangle \bigg|\\&=\sup_{\|h\|=1}\bigg|\int_{\Omega}\int_{\Omega_{w}}\langle f_{w,v},\alpha_{w,v}P_{V_{w,v}}\Lambda_{w}\pi_{W_{w}}h\rangle d\mu(v)d\mu(w)\bigg|\\&\leq\sup_{\|h\|=1}\bigg(\int_{\Omega}\int_{\Omega_{w}}\|f_{w,v}\|^{2}d\mu(v)d\mu(w))\bigg)^{\frac{1}{2}} \bigg(\int_{\Omega}\int_{\Omega_{w}}\alpha_{w,v}^{2}\|P_{V_{w,v}}\Lambda_{w}\pi_{W_{w}}h\|^{2}d\mu(v)d\mu(w)\bigg)^{\frac{1}{2}}\\&\leq \sqrt{B}\|f\|.
		\end{align*}
	\end{proof}
	\begin{definition}
		Let $\mathcal{R}$ be an relay fusion frame for $H$. Then the analysis operator for $\mathcal{R}$ is defined by 
		\begin{equation*}
			T_{\mathcal{R}}: H\rightarrow \biggl(\int_{\Omega}\int_{\Omega_{w}}\oplus V_{w,v}\biggr)_{l^{2}},\quad with \quad T_{\mathcal{R}}(f)=\{\alpha_{w,v}P_{V_{w,v}}\Lambda_{w}\pi_{W_{w}}f\}_{w\in\Omega,v\in\Omega_{w}},\quad \forall f\in H.
		\end{equation*}
	We call the adjoint $T_{\mathcal{R}}^{\ast}$ of the analysis operator the synthesis operator of $\mathcal{R}$.
	\end{definition}
	\begin{proposition}
		Let $\mathcal{R}$ be an relay fusion frame for $H$. Then 
		\begin{equation*}
			T_{\mathcal{R}}^{\ast}f=\int_{\Omega}\int_{\Omega_{w}}\alpha_{w,v}\pi_{W_{w}}\Lambda_{w}^{\ast}f_{w,v}d\mu(v)d\mu(w),\quad \forall \{f_{w,v}\}_{w\in\Omega,v\in\Omega_{w}}\in \biggl(\int_{\Omega}\int_{\Omega_{w}}\oplus V_{w,v}\biggr)_{l^{2}}.
		\end{equation*}
	\end{proposition}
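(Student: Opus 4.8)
\emph{Proof plan.} The plan is to verify the claimed formula by pairing both sides with an arbitrary $h\in H$ and using the defining relation $\langle T_{\mathcal{R}}^{\ast}f,h\rangle=\langle f,T_{\mathcal{R}}h\rangle$ of the adjoint. First I would observe that the upper bound in \eqref{eq1} shows $T_{\mathcal{R}}$ is a bounded operator, so $T_{\mathcal{R}}^{\ast}$ exists as a bounded operator from $\bigl(\int_{\Omega}\int_{\Omega_{w}}\oplus V_{w,v}\bigr)_{l^{2}}$ to $H$. Moreover, by Lemma \ref{lem1}, for any $f=\{f_{w,v}\}_{w\in\Omega,v\in\Omega_{w}}$ in the direct-integral space the vector integral $g:=\int_{\Omega}\int_{\Omega_{w}}\alpha_{w,v}\pi_{W_{w}}\Lambda_{w}^{\ast}f_{w,v}\,d\mu(v)d\mu(w)$ converges, so the right-hand side of the asserted identity is a well-defined element of $H$.

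Next, fix $h\in H$ and expand $\langle f,T_{\mathcal{R}}h\rangle$ using the inner product on the direct integral together with the definition of $T_{\mathcal{R}}$:
$$\langle f,T_{\mathcal{R}}h\rangle=\int_{\Omega}\int_{\Omega_{w}}\langle f_{w,v},\alpha_{w,v}P_{V_{w,v}}\Lambda_{w}\pi_{W_{w}}h\rangle\,d\mu(v)d\mu(w).$$
Since each $\alpha_{w,v}$ is a positive scalar it passes freely through the inner product; since $P_{V_{w,v}}$ is an orthogonal projection it is self-adjoint and fixes $f_{w,v}\in V_{w,v}$; likewise $\pi_{W_{w}}^{\ast}=\pi_{W_{w}}$, while $\Lambda_{w}^{\ast}$ is the Hilbert-space adjoint of $\Lambda_{w}\in B(H,K_{w})$. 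Transferring these operators to the other factor of the inner product turns the integrand into $\langle \alpha_{w,v}\pi_{W_{w}}\Lambda_{w}^{\ast}f_{w,v},h\rangle$.

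Finally, I would interchange the iterated integral with the inner product against the fixed vector $h$ — this is precisely the estimate performed in the proof of Lemma \ref{lem1}, which shows the scalar integrand is absolutely integrable and that the weakly defined integral coincides with the vector $g$ — to obtain
$$\langle f,T_{\mathcal{R}}h\rangle=\Bigl\langle \int_{\Omega}\int_{\Omega_{w}}\alpha_{w,v}\pi_{W_{w}}\Lambda_{w}^{\ast}f_{w,v}\,d\mu(v)d\mu(w),\,h\Bigr\rangle=\langle g,h\rangle.$$
Since this holds for every $h\in H$ and every $f$ in the direct-integral space, uniqueness of the adjoint gives $T_{\mathcal{R}}^{\ast}f=g$, which is the claim. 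The only genuinely delicate point is this last interchange of integration with the inner product (equivalently, the identification of the weakly defined integral with an honest vector of $H$); the measurability hypotheses $(1)$--$(3)$ in the definition of a continuous relay fusion frame are exactly what make the scalar integrands measurable, so that Fubini/Tonelli and the Cauchy--Schwarz estimate of Lemma \ref{lem1} apply, while the remaining manipulations are routine bookkeeping with the self-adjoint projections $P_{V_{w,v}},\pi_{W_{w}}$ and the adjoint $\Lambda_{w}^{\ast}$.
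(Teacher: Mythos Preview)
Your proposal is correct and follows essentially the same route as the paper: both compute the pairing $\langle T_{\mathcal{R}}g,f\rangle$ (equivalently $\langle f,T_{\mathcal{R}}h\rangle$) via the direct-integral inner product, move $P_{V_{w,v}}$, $\Lambda_{w}$, and $\pi_{W_{w}}$ across by taking adjoints, and identify the resulting integral with the claimed formula for $T_{\mathcal{R}}^{\ast}f$. You are simply more explicit than the paper about boundedness of $T_{\mathcal{R}}$, well-definedness of the vector integral via Lemma~\ref{lem1}, and the measurability/Fubini justification for pulling the inner product outside the integral.
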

	\begin{proof}
		Let $g\in H$ and $f=\{f_{w,v}\}_{w\in\Omega,v\in\Omega_{w}}\in \biggl(\int_{\Omega}\int_{\Omega_{w}}\oplus V_{w,v}\biggr)_{l^{2}}$.
		Then
		\begin{align*}
			\langle T_{\mathcal{R}}(g),f\rangle&=\langle \{\alpha_{w,v}P_{V_{w,v}}\Lambda_{w}\pi_{W_{w}}g\}_{w\in\Omega,v\in\Omega_{w}}, \{f_{w,v}\}_{w\in\Omega,v\in\Omega_{w}}\rangle\\&=\int_{\Omega}\int_{\Omega_{w}}\langle g,\alpha_{w,v}\pi_{W_{w}}\Lambda_{w}^{\ast}f_{w,v}\rangle d\mu(v)d\mu(w)\\&=\langle g,T_{\mathcal{R}}^{\ast}(f)\rangle.
		\end{align*}
	\end{proof}
	
	\begin{theorem}
		The following assertions are equivalent:
		\begin{itemize}
			\item[(1)] $\mathcal{R}$ is an relay fusion frame for $H$.
			\item[(2)] $T_{\mathcal{R}}$ is injective and has closed range.
		\end{itemize}
	\end{theorem}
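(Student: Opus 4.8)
The plan is to recast the defining inequality \eqref{eq1} in operator-theoretic language and then invoke standard Hilbert-space arguments (Cauchy completeness and the bounded inverse theorem). The first step I would carry out is to record the identity
\[
\|T_{\mathcal{R}}f\|^{2}=\int_{\Omega}\int_{\Omega_{w}}\alpha_{w,v}^{2}\|P_{V_{w,v}}\Lambda_{w}\pi_{W_{w}}f\|^{2}\,d\mu(v)\,d\mu(w),\qquad f\in H,
\]
which is immediate from the definition of the norm on $\bigl(\int_{\Omega}\int_{\Omega_{w}}\oplus V_{w,v}\bigr)_{l^{2}}$ and of the analysis operator $T_{\mathcal{R}}$. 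With this, the relay-fusion frame condition \eqref{eq1} is literally the statement $A\|f\|^{2}\le\|T_{\mathcal{R}}f\|^{2}\le B\|f\|^{2}$ for all $f\in H$. I will use throughout that $T_{\mathcal{R}}$ is a well-defined bounded operator into the $l^{2}$-direct integral space, which is what Lemma \ref{lem1} (together with the Bessel/upper part of the hypothesis) provides.

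For $(1)\Rightarrow(2)$ I would argue as follows. Injectivity is immediate from the lower bound: $T_{\mathcal{R}}f=0$ forces $A\|f\|^{2}\le 0$, hence $f=0$. For closedness of the range, I would take $g_{n}=T_{\mathcal{R}}f_{n}\to g$ in $\bigl(\int_{\Omega}\int_{\Omega_{w}}\oplus V_{w,v}\bigr)_{l^{2}}$; from $\sqrt{A}\,\|f_{n}-f_{m}\|\le\|T_{\mathcal{R}}(f_{n}-f_{m})\|=\|g_{n}-g_{m}\|$ the sequence $(f_{n})$ is Cauchy in $H$, so $f_{n}\to f$ for some $f\in H$, and continuity of $T_{\mathcal{R}}$ gives $g=\lim T_{\mathcal{R}}f_{n}=T_{\mathcal{R}}f\in\mathrm{Range}(T_{\mathcal{R}})$.

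For $(2)\Rightarrow(1)$ I would first note that boundedness of $T_{\mathcal{R}}$ already yields the upper inequality $\|T_{\mathcal{R}}f\|^{2}\le\|T_{\mathcal{R}}\|^{2}\|f\|^{2}$, so only the lower bound needs work. Since $\mathrm{Range}(T_{\mathcal{R}})$ is a closed subspace of a Hilbert space it is itself a Hilbert space, and $T_{\mathcal{R}}\colon H\to\mathrm{Range}(T_{\mathcal{R}})$ is a bounded bijection; by the bounded inverse theorem $T_{\mathcal{R}}^{-1}$ is bounded, whence
\[
\|f\|=\|T_{\mathcal{R}}^{-1}T_{\mathcal{R}}f\|\le\|T_{\mathcal{R}}^{-1}\|\,\|T_{\mathcal{R}}f\|,\qquad f\in H,
\]
i.e. $\|T_{\mathcal{R}}^{-1}\|^{-2}\,\|f\|^{2}\le\|T_{\mathcal{R}}f\|^{2}$. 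Combining this with the upper estimate and the identity for $\|T_{\mathcal{R}}f\|^{2}$ recovers \eqref{eq1} with $A=\|T_{\mathcal{R}}^{-1}\|^{-2}$ and $B=\|T_{\mathcal{R}}\|^{2}$, so $\mathcal{R}$ is a relay fusion frame. The main point to be careful about is the standing assumption that $T_{\mathcal{R}}$ genuinely maps into and is bounded on the $l^{2}$-direct integral space — this is exactly where Lemma \ref{lem1} is used, and without it the condition in (2) would not even be meaningful; the rest is the classical analysis-operator characterization of frames transported to the continuous relay-fusion setting.
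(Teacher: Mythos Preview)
Your proof is correct and, for the direction $(1)\Rightarrow(2)$, follows exactly the paper's argument: the lower frame bound gives injectivity, and the same inequality forces preimages of a Cauchy sequence in the range to be Cauchy in $H$, while continuity of $T_{\mathcal{R}}$ identifies the limit. For $(2)\Rightarrow(1)$ the paper simply writes ``This is obvious,'' whereas you spell out the standard bounded-inverse-theorem argument and correctly flag the implicit standing assumption that $T_{\mathcal{R}}$ is already a bounded operator into the $l^{2}$-direct integral (i.e.\ that $\mathcal{R}$ is at least Bessel), without which condition~(2) is not even well-posed; your treatment is therefore more complete than the paper's on this point, but the underlying approach is the same.
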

	\begin{proof}
		$(1)\implies (2)$
		We have for each $f\in H$:
		\begin{equation*}
			\int_{\Omega}\int_{\Omega_{w}}\alpha_{w,v}\|P_{V_{w,v}}\Lambda_{w}\pi_{W_{w}}f\|^{2}d\mu(v)d\mu(w)=\|T_{\mathcal{R}}f\|^{2}.
		\end{equation*}
	First we prove that $T_{\mathcal{R}}$ is injective, let $f\in H$ such that $T_{\mathcal{R}}f=0$, since 
	\begin{equation*}
		A\|f\|^{2}\leq \|T_{\mathcal{R}}f\|^{2}, \quad\forall f\in H.
	\end{equation*}
Then $f=0$.

We now show that $T_{\mathcal{R}}$ has closed range. Let $\{T_{\mathcal{R}}(x_{n})\}_{n\in \mathbb{N}}\in Range(T_{\mathcal{R}})$ such that $\lim_{n\rightarrow \infty}T_{\mathcal{R}}(x_{n})=y$. For $n,m\in \mathbb{N}$, we have 
\begin{equation*}
	A\|x_{n}-x_{m}\|^{2}\leq \|T_{\mathcal{R}}(x_{n}-x_{m})\|^{2}.
\end{equation*}
Since $\{T_{\mathcal{R}}(x_{n})\}_{n\in \mathbb{N}}$ is Cauchy sequence in $H$, then $\|x_{n}-x_{m}\|\rightarrow 0$, as $n,m\rightarrow \infty$, therefore the sequence $\{x_{n}\}_{n\in\mathbb{N}}$ is Cauchy and hence there exist $x\in H$ such that $x_{n}\rightarrow x$ as $n\rightarrow \infty$. And we have  
\begin{equation*}
	\|T_{\mathcal{R}}(x_{n}-x)\|^{2}\leq B\|x_{n}-x\|^{2}.
\end{equation*}
Thus $\|T_{\mathcal{R}}x_{n}-T_{\mathcal{R}}x\|\rightarrow 0$ as $n\rightarrow \infty$ implies that $T_{\mathcal{R}}x=y$. It concludes that the range of $T_{\mathcal{R}}$ is closed.

$(2)\implies (1)$ This is obvious.

	\end{proof}
By composing $T_{\mathcal{R}}$ and $T_{\mathcal{R}}^{\ast}$, we obtain the frame operator for $\mathcal{R}$.
\begin{definition}
	Let $\mathcal{R}$ be an relay fusion frame operator $S_{\mathcal{R}}$ for $\mathcal{R}$ is defined by 
	\begin{equation*}
		S_{\mathcal{R}}f=T_{\mathcal{R}}^{\ast}T_{\mathcal{R}}f=\int_{\Omega}\int_{\Omega_{w}}\alpha_{w,v}\pi_{W_{w}}\Lambda_{w}^{\ast}P_{V_{w,v}}\pi_{W_{w}}fd\mu(v)d\mu(w).
	\end{equation*}
\end{definition}	
	
\begin{theorem}
	 $\mathcal{R}$ is a bessel relay fusion sequence in $H$ with bound $B$ if and only if the map 
	\begin{equation*}
		\{f_{w,v}\}_{w\in\Omega,v\in\Omega_{w}}\rightarrow \int_{\Omega}\int_{\Omega}\alpha_{w,v}\pi_{W_{w}}\Lambda_{w}^{\ast}f_{w,v}d\mu(v)d\mu(w)
	\end{equation*}
is a well-defined bounded operator frome $\bigg(\int_{\Omega}\int_{\Omega_{w}}\oplus V_{w,v}\bigg)_{l^{2}}$ to $H$ and its norm is less or equal to $\sqrt{B}$.
\end{theorem}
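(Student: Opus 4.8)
The plan is to prove both implications by identifying the displayed map with the synthesis operator $T_{\mathcal{R}}^{\ast}$ computed above. Write $U$ for the map $\{f_{w,v}\}_{w\in\Omega,v\in\Omega_{w}}\mapsto \int_{\Omega}\int_{\Omega_{w}}\alpha_{w,v}\pi_{W_{w}}\Lambda_{w}^{\ast}f_{w,v}\,d\mu(v)\,d\mu(w)$, considered a priori as a partially defined map on $\left(\int_{\Omega}\int_{\Omega_{w}}\oplus V_{w,v}\right)_{l^{2}}$.

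For the forward implication, I would assume $\mathcal{R}$ is a Bessel relay fusion sequence with bound $B$. Then Lemma~\ref{lem1} guarantees that the vector-valued integral defining $U\{f_{w,v}\}$ converges in $H$ for every element $\{f_{w,v}\}$ of the domain, so $U$ is well defined on all of $\left(\int_{\Omega}\int_{\Omega_{w}}\oplus V_{w,v}\right)_{l^{2}}$. Boundedness with $\|U\|\le\sqrt{B}$ is exactly the chain of estimates already carried out in the proof of Lemma~\ref{lem1}: for $h\in H$ with $\|h\|=1$ one expands $\langle U\{f_{w,v}\},h\rangle=\int_{\Omega}\int_{\Omega_{w}}\langle f_{w,v},\alpha_{w,v}P_{V_{w,v}}\Lambda_{w}\pi_{W_{w}}h\rangle\,d\mu(v)\,d\mu(w)$, applies the Cauchy--Schwarz inequality on the product measure space $\Omega\times\Omega_{w}$, and bounds the second factor by $\sqrt{B}\,\|h\|$ using the upper relay fusion frame inequality in \eqref{eq1}; taking the supremum over such $h$ gives $\|U\{f_{w,v}\}\|\le\sqrt{B}\,\|\{f_{w,v}\}\|$.

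For the converse, I would assume $U$ is a well-defined bounded operator with $\|U\|\le\sqrt{B}$ and first compute its Hilbert space adjoint $U^{\ast}:H\to\left(\int_{\Omega}\int_{\Omega_{w}}\oplus V_{w,v}\right)_{l^{2}}$. For $g\in H$ and $\{f_{w,v}\}$ in the domain, moving $\pi_{W_{w}}$ (selfadjoint) and $\Lambda_{w}^{\ast}$ (adjoint $\Lambda_{w}$) across the inner product and using that $f_{w,v}\in V_{w,v}$, so $P_{V_{w,v}}$ may be inserted freely, one obtains $\langle U\{f_{w,v}\},g\rangle=\langle\{f_{w,v}\},\{\alpha_{w,v}P_{V_{w,v}}\Lambda_{w}\pi_{W_{w}}g\}\rangle$, whence $U^{\ast}g=\{\alpha_{w,v}P_{V_{w,v}}\Lambda_{w}\pi_{W_{w}}g\}_{w\in\Omega,v\in\Omega_{w}}=T_{\mathcal{R}}(g)$. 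In particular $T_{\mathcal{R}}(g)$ belongs to the $l^{2}$-space for every $g$, and together with the standing measurability hypotheses $(1)$--$(3)$ this yields the Bessel estimate $\int_{\Omega}\int_{\Omega_{w}}\alpha_{w,v}^{2}\|P_{V_{w,v}}\Lambda_{w}\pi_{W_{w}}g\|^{2}\,d\mu(v)\,d\mu(w)=\|U^{\ast}g\|^{2}\le\|U\|^{2}\|g\|^{2}\le B\|g\|^{2}$, i.e. $\mathcal{R}$ is a Bessel relay fusion sequence with bound $B$.

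The algebraic manipulations above are routine; the step requiring genuine care is the interchange of the vector-valued (weak/Bochner) integral with the inner product, i.e. the justification that $\langle\int_{\Omega}\int_{\Omega_{w}}(\cdot)\,d\mu(v)\,d\mu(w),h\rangle=\int_{\Omega}\int_{\Omega_{w}}\langle(\cdot),h\rangle\,d\mu(v)\,d\mu(w)$, together with the Fubini argument on the product measure $\mu\times\mu$ over $\Omega\times\Omega_{w}$. This is where one must check that the relevant scalar integrands are measurable and integrable, and it is precisely at this point that the weak/strong measurability built into the definition of a relay fusion frame and the Bessel bound $B$ enter. I do not expect any obstruction beyond making these measure-theoretic justifications precise.
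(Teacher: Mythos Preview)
Your forward implication is identical to the paper's: both invoke Lemma~\ref{lem1} for well-definedness and repeat the Cauchy--Schwarz estimate from that lemma to get $\|U\|\le\sqrt{B}$.

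For the converse you take a different, somewhat cleaner route. The paper does not compute the adjoint; instead it writes the Bessel quantity as
\[
\int_{\Omega}\int_{\Omega_{w}}\alpha_{w,v}^{2}\|P_{V_{w,v}}\Lambda_{w}\pi_{W_{w}}f\|^{2}\,d\mu(v)\,d\mu(w)
=\bigl\langle T_{\mathcal{R}}^{\ast}\{\alpha_{w,v}P_{V_{w,v}}\Lambda_{w}\pi_{W_{w}}f\},f\bigr\rangle,
\]
bounds this by $\|T_{\mathcal{R}}^{\ast}\|\cdot\bigl(\int\!\!\int\alpha_{w,v}^{2}\|P_{V_{w,v}}\Lambda_{w}\pi_{W_{w}}f\|^{2}\bigr)^{1/2}\|f\|$, and cancels the common square root to obtain the Bessel inequality. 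Your approach identifies $U^{\ast}=T_{\mathcal{R}}$ via the duality pairing and then simply uses $\|U^{\ast}\|=\|U\|\le\sqrt{B}$. Your argument is more conceptual and avoids the self-referential inequality, but it does require one extra step you gloss over: the pairing identity only shows $\langle\{f_{w,v}\},U^{\ast}g\rangle=\int\!\!\int\langle f_{w,v},\alpha_{w,v}P_{V_{w,v}}\Lambda_{w}\pi_{W_{w}}g\rangle$, and to conclude that $U^{\ast}g$ actually equals the field $\{\alpha_{w,v}P_{V_{w,v}}\Lambda_{w}\pi_{W_{w}}g\}$ (and hence that this field lies in the $l^{2}$-space) one must invoke Riesz representation and a componentwise identification in the direct integral. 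The paper's bootstrap sidesteps this by working directly with the scalar integral, though strictly speaking it too needs a truncation argument to handle the case where the integral is a priori infinite. Both approaches are correct once these routine points are made precise.
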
	
	\begin{proof}
		First assume that $\mathcal{R}$ is a bessel relay fusion sequence for $H$ with bound $B$, by lemma \ref{lem1} the $\int_{\Omega}\int_{\Omega_{w}}\alpha_{w,v}\pi_{W_{w}}\Lambda_{w}^{\ast}f_{w,v}d\mu(v)d\mu(w)$ is convergent. Thus $T_{\mathcal{R}}^{\ast}(\{f_{w,v}\}_{w\in\Omega,v\in\Omega_{w}})$ is well defined. A simple calculation as in Lemma \ref{lem1} show that $T_{\mathcal{R}}^{\ast}$ is bounded and $\|T_{\mathcal{R}}^{\ast}\|\leq \sqrt{B}$.
		
		For the oppsite implication, suppose that $T_{\mathcal{R}}^{\ast}$ is well defined and that $\|T_{\mathcal{R}}^{\ast}\|\leq \sqrt{B}$. Then 
		\begin{align*}
			\int_{\Omega}\int_{\Omega_{w}}\alpha_{w,v}^{2}\|P_{w,v}\Lambda_{w}\pi_{W_{w}}f\|^{2}d\mu(v)d\mu(w)&=\int_{\Omega}\int_{\Omega_{w}}\alpha_{w,v}^{2}\langle \pi_{W_{w}}\Lambda_{w}^{\ast}P_{V_{w,v}}\Lambda_{w}\pi_{W_{w}}f,f\rangle d\mu(v)d\mu(w)\\&=\langle T_{\mathcal{R}}^{\ast}\{\alpha_{w,v}P_{V_{w,v}}\Lambda_{w}\pi_{W_{w}}f\}_{w\in\Omega,v\in\Omega_{w}},f\rangle \\&\leq \bigg(\int_{\Omega}\int_{\Omega_{w}}\alpha_{w,v}^{2}\|P_{V_{w,v}}\Lambda_{w}\pi_{W_{w}}f\|^{2}d\mu(v)d\mu(w)\bigg)^{\frac{1}{2}}.\|T_{\mathcal{R}}^{\ast}\|.\|f\|,
		\end{align*}
	so we have 
	\begin{equation*}
		\int_{\Omega}\int_{\Omega_{w}}\alpha_{w,v}^{2}\|P_{V_{w,v}}\Lambda_{w}\pi_{W_{w}}f\|^{2}d\mu(v)d\mu(w)\leq \|T_{\mathcal{R}}^{\ast}\|.\|f\|\leq \sqrt{B}.\|f\|.
	\end{equation*}
	\end{proof}
	\begin{theorem}
		Let $\mathcal{R}$ be an relay fusion frame with bounds $A$ and $B$. Then the frame operator for $\mathcal{R}$ is bounded, positive, self-adjoint, invertible operator on $H$ with 
		\begin{equation*}
			AI_{H}\leq S_{\mathcal{R}} \leq BI_{H}.
		\end{equation*}
	\end{theorem}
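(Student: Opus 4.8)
The plan is to read off every asserted property of $S_{\mathcal{R}}$ from the identity $S_{\mathcal{R}}=T_{\mathcal{R}}^{\ast}T_{\mathcal{R}}$ together with the defining inequality \eqref{eq1}. First I would record the quadratic-form identity: for every $f\in H$,
\begin{equation*}
\langle S_{\mathcal{R}}f,f\rangle=\langle T_{\mathcal{R}}^{\ast}T_{\mathcal{R}}f,f\rangle=\|T_{\mathcal{R}}f\|^{2}=\int_{\Omega}\int_{\Omega_{w}}\alpha_{w,v}^{2}\|P_{V_{w,v}}\Lambda_{w}\pi_{W_{w}}f\|^{2}\,d\mu(v)\,d\mu(w).
\end{equation*}
By Lemma \ref{lem1} and the preceding theorem the integral defining $S_{\mathcal{R}}f$ converges and $T_{\mathcal{R}}$ is bounded, so $S_{\mathcal{R}}\in B(H)$ is well defined, and $S_{\mathcal{R}}^{\ast}=(T_{\mathcal{R}}^{\ast}T_{\mathcal{R}})^{\ast}=T_{\mathcal{R}}^{\ast}T_{\mathcal{R}}=S_{\mathcal{R}}$, which gives self-adjointness.

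Next, combining the displayed identity with the continuous relay fusion frame inequality \eqref{eq1} yields $A\|f\|^{2}\leq\langle S_{\mathcal{R}}f,f\rangle\leq B\|f\|^{2}$ for all $f\in H$, which is precisely $AI_{H}\leq S_{\mathcal{R}}\leq BI_{H}$. In particular $\langle S_{\mathcal{R}}f,f\rangle\geq A\|f\|^{2}>0$ for $f\neq 0$, so $S_{\mathcal{R}}$ is positive; being positive and self-adjoint, $\|S_{\mathcal{R}}\|=\sup_{\|f\|=1}\langle S_{\mathcal{R}}f,f\rangle\leq B$, so it is bounded.

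Finally, for invertibility I would argue exactly as in the proof that $T_{\mathcal{R}}$ has closed range: from $A\|f\|^{2}\leq\langle S_{\mathcal{R}}f,f\rangle\leq\|S_{\mathcal{R}}f\|\,\|f\|$ we obtain $\|S_{\mathcal{R}}f\|\geq A\|f\|$, so $S_{\mathcal{R}}$ is injective with closed range. Since $S_{\mathcal{R}}$ is self-adjoint, $\mathrm{Range}(S_{\mathcal{R}})^{\perp}=\ker(S_{\mathcal{R}}^{\ast})=\ker(S_{\mathcal{R}})=\{0\}$, hence the range is dense; being also closed it is all of $H$, so $S_{\mathcal{R}}$ is bijective and the bounded inverse theorem gives $S_{\mathcal{R}}^{-1}\in B(H)$. (Equivalently, one may note $\|I_{H}-B^{-1}S_{\mathcal{R}}\|\leq 1-A/B<1$ and invoke a Neumann series.) I do not anticipate a real obstacle here; the only point deserving care is the well-definedness and boundedness of $S_{\mathcal{R}}$, i.e. convergence of the iterated integral, which is already secured by Lemma \ref{lem1} and the analysis/synthesis operator results established above.
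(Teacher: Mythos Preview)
Your proof is correct and follows essentially the same approach as the paper: both use $S_{\mathcal{R}}=T_{\mathcal{R}}^{\ast}T_{\mathcal{R}}$ to get self-adjointness and boundedness, and then read off $AI_{H}\leq S_{\mathcal{R}}\leq BI_{H}$ from the frame inequality \eqref{eq1} via the identity $\langle S_{\mathcal{R}}f,f\rangle=\|T_{\mathcal{R}}f\|^{2}$. The only difference is that the paper simply asserts invertibility from the operator inequality, whereas you spell out the standard argument (injective with closed range plus self-adjointness gives surjectivity); this is extra detail rather than a different method.
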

	\begin{proof}
		$S_{\mathcal{R}}$ is bounded as a composition of two bounded operators
		\begin{equation*}
			\|S_{\mathcal{R}}\|=\|T_{\mathcal{R}}^{\ast}T_{\mathcal{R}}\|=\|T_{\mathcal{R}}^{\ast}\|^{2}\leq B.
		\end{equation*}
	Since 
	\begin{equation*}
		S_{\mathcal{R}}^{\ast}=\big(T_{\mathcal{R}}^{\ast}T_{\mathcal{R}}\big)^{\ast}=T_{\mathcal{R}}^{\ast}T_{\mathcal{R}}=S_{\mathcal{R}},
	\end{equation*}
the operator $S_{\mathcal{R}}$ is self-adjoint. The inequality \eqref{eq1} means that 
\begin{equation*}
	A\|f\|^{2}\leq \langle S_{\mathcal{R}}f,f\rangle \leq B\|f\|^{2}, \forall f\in H.
\end{equation*}
This shows that 
\begin{equation*}
	AI_{H}\leq S_{\mathcal{R}}\leq BI_{H},
\end{equation*}
and hence $S_{\mathcal{R}}$ is positive, invertible operator on $H$.
	\end{proof}
\begin{theorem}
	Let $\mathcal{R}$ be an relay fusion frame for $H$ with frame operator $S_{\mathcal{R}}$, we have then, for all $f\in H$.
	\begin{align*}
		f&=\int_{\Omega}\int_{\Omega_{w}}\alpha_{w,v}^{2}S_{\mathcal{R}}^{-1}\pi_{W_{w}}\Lambda_{w}^{\ast}P_{V_{w,v}}\Lambda_{w}\pi_{W_{w}}fd\mu(v)d\mu(w)\\&=\int_{\Omega}\int_{\Omega_{w}}\alpha_{w,v}^{2}\pi_{W_{w}}\Lambda_{w}^{\ast}P_{V_{w,v}}\Lambda_{w}\pi_{W_{w}}S_{\mathcal{R}}^{-1}fd\mu(v)d\mu(w)
	\end{align*}
\end{theorem}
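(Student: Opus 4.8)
The plan is to derive both reconstruction formulas from the invertibility of the frame operator $S_{\mathcal{R}}$, established in the preceding theorem, together with the explicit action $S_{\mathcal{R}}g=\int_{\Omega}\int_{\Omega_{w}}\alpha_{w,v}^{2}\pi_{W_{w}}\Lambda_{w}^{\ast}P_{V_{w,v}}\Lambda_{w}\pi_{W_{w}}g\,d\mu(v)d\mu(w)$ coming from the definition $S_{\mathcal{R}}=T_{\mathcal{R}}^{\ast}T_{\mathcal{R}}$. Since $S_{\mathcal{R}}$ is bounded, self-adjoint, positive and invertible on $H$, its inverse $S_{\mathcal{R}}^{-1}$ is again bounded and self-adjoint, and every $f\in H$ admits the two trivial decompositions $f=S_{\mathcal{R}}^{-1}S_{\mathcal{R}}f$ and $f=S_{\mathcal{R}}S_{\mathcal{R}}^{-1}f$. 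Feeding the integral formula for $S_{\mathcal{R}}$ into the first will produce the first displayed identity, and applying the integral formula for $S_{\mathcal{R}}$ to the vector $S_{\mathcal{R}}^{-1}f$ will produce the second.

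For the second identity I would simply write $f=S_{\mathcal{R}}(S_{\mathcal{R}}^{-1}f)$ and apply the definition of $S_{\mathcal{R}}$ with argument $S_{\mathcal{R}}^{-1}f$, obtaining directly $f=\int_{\Omega}\int_{\Omega_{w}}\alpha_{w,v}^{2}\pi_{W_{w}}\Lambda_{w}^{\ast}P_{V_{w,v}}\Lambda_{w}\pi_{W_{w}}S_{\mathcal{R}}^{-1}f\,d\mu(v)d\mu(w)$; convergence of the integral is guaranteed by Lemma~\ref{lem1}, since a relay fusion frame is in particular a Bessel relay fusion sequence and $S_{\mathcal{R}}^{-1}f\in H$. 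For the first identity I would start from $f=S_{\mathcal{R}}^{-1}(S_{\mathcal{R}}f)$, insert the integral expression for $S_{\mathcal{R}}f$, and then move the bounded operator $S_{\mathcal{R}}^{-1}$ inside the (weakly defined) double integral.

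This last interchange is the only step needing a word of justification. Writing $h=S_{\mathcal{R}}f=\int_{\Omega}\int_{\Omega_{w}}\alpha_{w,v}^{2}\pi_{W_{w}}\Lambda_{w}^{\ast}P_{V_{w,v}}\Lambda_{w}\pi_{W_{w}}f\,d\mu(v)d\mu(w)$ in the weak sense, i.e. $\langle h,k\rangle=\int_{\Omega}\int_{\Omega_{w}}\alpha_{w,v}^{2}\langle \pi_{W_{w}}\Lambda_{w}^{\ast}P_{V_{w,v}}\Lambda_{w}\pi_{W_{w}}f,k\rangle\,d\mu(v)d\mu(w)$ for all $k\in H$, the boundedness and self-adjointness of $S_{\mathcal{R}}^{-1}$ give $\langle S_{\mathcal{R}}^{-1}h,k\rangle=\langle h,S_{\mathcal{R}}^{-1}k\rangle=\int_{\Omega}\int_{\Omega_{w}}\alpha_{w,v}^{2}\langle S_{\mathcal{R}}^{-1}\pi_{W_{w}}\Lambda_{w}^{\ast}P_{V_{w,v}}\Lambda_{w}\pi_{W_{w}}f,k\rangle\,d\mu(v)d\mu(w)$ for all $k\in H$, which is precisely the assertion that $S_{\mathcal{R}}^{-1}h$ equals the claimed first integral. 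Since $h=S_{\mathcal{R}}f$ and $f=S_{\mathcal{R}}^{-1}h$, this finishes the first equality. As this is the standard frame reconstruction argument transported to the continuous relay fusion setting, I do not expect any genuine obstacle; the only care required is the routine commutation of $S_{\mathcal{R}}^{-1}$ with the weak integral just described.
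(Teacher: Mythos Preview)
Your proposal is correct and follows exactly the same approach as the paper: both identities are obtained from $f=S_{\mathcal{R}}^{-1}S_{\mathcal{R}}f$ and $f=S_{\mathcal{R}}S_{\mathcal{R}}^{-1}f$ together with the integral expression for $S_{\mathcal{R}}$. If anything, your version is more careful, since you spell out the routine justification for pulling the bounded operator $S_{\mathcal{R}}^{-1}$ through the weak integral, a step the paper leaves implicit.
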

\begin{proof}
	We have $S_{\mathcal{R}}$ is invertible, then for all $f\in H$
	\begin{equation*}
		f=S_{\mathcal{R}}^{-1}S_{\mathcal{R}}f=\int_{\Omega}\int_{\Omega_{w}}\alpha_{w,v}^{2}S_{\mathcal{R}}^{-1}\pi_{W_{w}}\Lambda_{w}^{\ast}P_{V_{w,v}}\Lambda_{w}\pi_{W_{w}}fd\mu(v)d\mu(w),
	\end{equation*}
and 
    \begin{equation*}
    	f=S_{\mathcal{R}}S_{\mathcal{R}}^{-1}f=\int_{\Omega}\int_{\Omega_{w}}\alpha_{w,v}^{2}\pi_{W_{w}}\Lambda_{w}^{\ast}P_{V_{w,v}}\Lambda_{w}\pi_{W_{w}}S_{\mathcal{R}}^{-1}fd\mu(v)d\mu(w).
    \end{equation*}
\end{proof}	
\begin{theorem}
	Let $\mathcal{R}=\{W_{w},V_{w,v},\alpha_{w,v}\}_{w\in\Omega, v\in\Omega_{w}}$ and $\mathcal{R^{'}}=\{W_{w}^{'},V_{w,v}^{'},\Lambda_{w}^{'},\alpha_{w,v}^{'}\}_{w\in\Omega,v\in\Omega_{w}}$ be two bessel relay sequence for $H$ with bounds $B$ and $B^{'}$, respectively. Let $T_{\mathcal{R}}$ and $T_{\mathcal{R^{'}}}$ be their analysis operators such that $T_{\mathcal{R^{'}}}^{\ast}T_{\mathcal{R}}=I_{H}$. Then both $\mathcal{R}$ and $\mathcal{R^{'}}$ are relay fusion frames.
\end{theorem}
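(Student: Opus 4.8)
The plan is to read off lower frame bounds for both sequences directly from the operator identity $T_{\mathcal{R}'}^{\ast}T_{\mathcal{R}}=I_{H}$ together with its adjoint, using nothing more than Cauchy--Schwarz and the Bessel property. Since $\mathcal{R}$ and $\mathcal{R}'$ are already assumed to be Bessel relay fusion sequences, the measurability conditions (1)--(3) in the definition of a continuous relay fusion frame hold for each of them, and the upper bounds $B$, $B'$ are in hand; so the only thing left to establish is the two lower inequalities.

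First I would fix $f\in H$ and use the hypothesis to write
\[
\|f\|^{2}=\langle f,f\rangle=\langle T_{\mathcal{R}'}^{\ast}T_{\mathcal{R}}f,\,f\rangle=\langle T_{\mathcal{R}}f,\,T_{\mathcal{R}'}f\rangle .
\]
Applying the Cauchy--Schwarz inequality in $\bigl(\int_{\Omega}\int_{\Omega_{w}}\oplus V_{w,v}\bigr)_{l^{2}}$ and then the Bessel bound $\|T_{\mathcal{R}'}f\|\leq\sqrt{B'}\,\|f\|$ gives $\|f\|^{2}\leq\sqrt{B'}\,\|f\|\,\|T_{\mathcal{R}}f\|$, hence
\[
\frac{1}{B'}\|f\|^{2}\leq\|T_{\mathcal{R}}f\|^{2}=\int_{\Omega}\int_{\Omega_{w}}\alpha_{w,v}^{2}\|P_{V_{w,v}}\Lambda_{w}\pi_{W_{w}}f\|^{2}\,d\mu(v)\,d\mu(w)\leq B\|f\|^{2}.
\]
This shows that $\mathcal{R}$ is a continuous relay fusion frame with bounds $1/B'$ and $B$. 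For $\mathcal{R}'$ I would observe that taking adjoints in the hypothesis yields $T_{\mathcal{R}}^{\ast}T_{\mathcal{R}'}=(T_{\mathcal{R}'}^{\ast}T_{\mathcal{R}})^{\ast}=I_{H}^{\ast}=I_{H}$, so the situation is symmetric in $\mathcal{R}$ and $\mathcal{R}'$; repeating the estimate with the two sequences interchanged gives $\frac{1}{B}\|f\|^{2}\leq\|T_{\mathcal{R}'}f\|^{2}\leq B'\|f\|^{2}$, so $\mathcal{R}'$ is a continuous relay fusion frame with bounds $1/B$ and $B'$.

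I do not anticipate a genuine obstacle: the argument is a two-line computation followed by the adjoint/symmetry remark. The only points deserving a word of care are that the pairing $\langle T_{\mathcal{R}}f,\,T_{\mathcal{R}'}f\rangle$ is legitimate (both analysis operators map into an $l^{2}$-type direct integral, so this is fine once the families $\{V_{w,v}\}$ and $\{V_{w,v}'\}$ are taken common to the two sequences, or after projecting the coefficients of one onto the subspaces of the other), and that $\|T_{\mathcal{R}'}^{\ast}\|\leq\sqrt{B'}$, which is exactly the content of the earlier theorem characterizing Bessel relay fusion sequences. With those in place the proof closes immediately.
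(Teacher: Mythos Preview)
Your argument is correct and is essentially the paper's own proof: both start from $\|f\|^{2}=\langle T_{\mathcal{R}}f,T_{\mathcal{R}'}f\rangle$, apply Cauchy--Schwarz together with the Bessel bound for the other sequence to obtain the lower bound $1/B'$ for $\mathcal{R}$, and then argue by symmetry for $\mathcal{R}'$. Your explicit observation that $T_{\mathcal{R}}^{\ast}T_{\mathcal{R}'}=I_{H}$ by taking adjoints, and your caveat that the pairing $\langle T_{\mathcal{R}}f,T_{\mathcal{R}'}f\rangle$ presupposes a common ambient $l^{2}$-space, are both sound and in fact clarify points the paper leaves implicit.
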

\begin{proof}
	We have for all $f\in H$
	\begin{align*}
		\|f\|^{4}&=\langle T_{\mathcal{R}}f,T_{\mathcal{R^{'}}}f\rangle\\&\leq \|T_{\mathcal{R}}f\|^{2}\|T_{\mathcal{R^{'}}}f\|^{2}\\&=\biggl(\int_{\Omega}\int_{\Omega_{w}}\alpha_{w,v}^{2}\|P_{V_{w,v}}\Lambda_{w}\pi_{W_{w}}f\|^{2}d\mu(v)d\mu(w)\biggr)\\&\quad\quad\quad.\biggl(\int_{\Omega}\int_{\Omega_{w}}\alpha_{w,v}^{'}\|P_{V^{'}_{w,v}}\Lambda_{w}^{'}\pi_{W_{w}^{'}}f\|^{2}d\mu(v)d\mu(w)\biggr)\\&\leq \biggl(\int_{\Omega}\int_{\Omega_{w}}\alpha_{w,v}^{2}\|P_{V_{w,v}}\Lambda_{w}\pi_{W_{w}}f\|^{2}d\mu(v)d\mu(w)\biggr)B^{'}\|f\|^{2}. 
	\end{align*}
Thus 
\begin{equation*}
	\frac{1}{B^{'}}\|f\|^{2}\leq \int_{\Omega}\int_{\Omega_{w}}\alpha_{w,v}^{2}\|P_{V_{w,v}}\Lambda_{w}\pi_{W_{w}}f\|^{2}d\mu(v)d\mu(w).
\end{equation*}
Similarly we obtain a lower bound for $\mathcal{R^{'}}$.
\end{proof}
\section{Duality of relay fusion frames}
\begin{lemma}\cite{GAV}
	Let $A\in B(H)$ and $V\subseteq H$ be a closed subspace. Then 
	\begin{equation*}
		\pi_{V}A^{\ast}=\pi_{V}A^{\ast}\pi_{\overline{AV}}.
	\end{equation*}
\end{lemma}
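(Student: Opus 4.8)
The plan is to reduce the claimed operator identity to the single inclusion $A^{\ast}\big((\overline{AV})^{\perp}\big)\subseteq V^{\perp}$. Writing $I_{H}=\pi_{\overline{AV}}+(I_{H}-\pi_{\overline{AV}})$ and composing on the left with $\pi_{V}A^{\ast}$ gives
\[
\pi_{V}A^{\ast}=\pi_{V}A^{\ast}\pi_{\overline{AV}}+\pi_{V}A^{\ast}(I_{H}-\pi_{\overline{AV}}),
\]
so the identity holds exactly when the second summand vanishes, i.e.\ when $\pi_{V}A^{\ast}$ annihilates the range of $I_{H}-\pi_{\overline{AV}}$, which is precisely $(\overline{AV})^{\perp}$.

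First I would fix an arbitrary $h\in H$ and set $g=(I_{H}-\pi_{\overline{AV}})h\in(\overline{AV})^{\perp}$. Since $AV\subseteq\overline{AV}$, the vector $g$ is orthogonal to $Av$ for every $v\in V$. Then, using the defining property of the adjoint, $\langle A^{\ast}g,v\rangle=\langle g,Av\rangle=0$ for all $v\in V$, hence $A^{\ast}g\in V^{\perp}$ and so $\pi_{V}A^{\ast}g=0$. Since $h$ was arbitrary, $\pi_{V}A^{\ast}(I_{H}-\pi_{\overline{AV}})=0$, which is the desired equality.

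There is no genuine obstacle here; the only step deserving a word of justification is the passage from $g\perp AV$ to $g\perp\overline{AV}$ (equivalently $(\overline{AV})^{\perp}=(AV)^{\perp}$), which is the elementary fact that the orthogonal complement of a set coincides with that of its closed linear span. If one prefers a more structural formulation, the whole argument can be packaged as follows: $\operatorname{ran}(A\pi_{V})=AV$, so $(A\pi_{V})^{\ast}=\pi_{V}A^{\ast}$ has kernel $\ker(\pi_{V}A^{\ast})=\big(\overline{\operatorname{ran}}(A\pi_{V})\big)^{\perp}=(\overline{AV})^{\perp}=\operatorname{ran}(I_{H}-\pi_{\overline{AV}})$, and the identity follows immediately.
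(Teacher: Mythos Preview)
Your argument is correct. The decomposition $I_{H}=\pi_{\overline{AV}}+(I_{H}-\pi_{\overline{AV}})$ together with the observation that $A^{\ast}\big((\overline{AV})^{\perp}\big)\subseteq V^{\perp}$ is exactly the standard way to establish this identity, and your alternative packaging via $\ker(\pi_{V}A^{\ast})=\big(\overline{\operatorname{ran}}(A\pi_{V})\big)^{\perp}$ is a clean restatement of the same fact.

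There is nothing to compare against in the present paper: the lemma is quoted from \cite{GAV} and stated without proof here. The original source proves it by essentially the same orthogonality computation you give (showing $\langle \pi_{V}A^{\ast}g,v\rangle=0$ for $g\perp\overline{AV}$ and $v\in V$), so your approach is not merely correct but also the canonical one.
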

\subsection{Global continuous relay dual of continuous relay fusion frames}

Let $\mathcal{R}$ be an continuous relay fusion frame for $H$. We consider global continuous relay space $\mathcal{K}=\big(\int_{\Omega}\oplus K_{w}\big)_{l^{2}}$ and let $\mathcal{F}_{\mathcal{K}}$ be a frame for $\mathcal{K}$, where every $K_{w}$ is local continuous relay space. We use $S_{\mathcal{F}_{\mathcal{K}}}$ to denote the frame operator for $\mathcal{K}$. Let $\widetilde{V_{w,v}}=S_{\mathcal{F}_{\mathcal{K}}}^{-1}V_{w,v}$ and $\widetilde{\Lambda_{w}}=S_{\mathcal{F}_{\mathcal{K}}}^{-1}P_{V_{w,v}}\Lambda_{w}$. We now prove that $\widetilde{\mathcal{R}}=\{W_{w},\widetilde{V_{w,v}},\widetilde{\Lambda_{w}},v_{w,v}\}_{w\in\Omega,v\in\Omega_{w}}$ is an continuous relay fusion frame for $H$ and we call $\widetilde{\mathcal{R}}$ the global continuous relay dual continuous relay fusion frame of $\mathcal{R}$.
\begin{theorem}
	Let $\mathcal{R}$ be an continuous relay fusion frame for $H$. Then $\widetilde{\mathcal{R}}$ is an continuous relay fusion frame for $H$, for all $f\in H$,
	\begin{equation*}
		f=\int_{\Omega}\int_{\Omega_{w}}\alpha_{w,v}^{2}S^{-1}_{\widetilde{\mathcal{R}}}\pi_{W_{w}}\widetilde{\Lambda_{w}}^{\ast}\widetilde{\Lambda_{w}}\pi_{W_{w}}fd\mu(v)d\mu(w)=\int_{\Omega}\int_{\Omega_{w}}\alpha_{w,v}^{2}\pi_{W_{w}}\widetilde{\Lambda_{w}}^{\ast}\widetilde{\Lambda_{w}}\pi_{W_{w}}S^{-1}_{\widetilde{\mathcal{R}}}fd\mu(v)d\mu(w).
	\end{equation*}
\end{theorem}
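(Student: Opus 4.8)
The plan is to obtain the frame inequality for $\widetilde{\mathcal{R}}$ by comparing it termwise with that of $\mathcal{R}$, and then to deduce the two reconstruction identities from the relay fusion frame operator theorem already established, applied now to $\widetilde{\mathcal{R}}$. First I would record that, since $\mathcal{F}_{\mathcal{K}}$ is a frame for $\mathcal{K}$, its frame operator $S_{\mathcal{F}_{\mathcal{K}}}$ is bounded, positive, self-adjoint and invertible, so $mI\le S_{\mathcal{F}_{\mathcal{K}}}\le MI$ for some $0<m\le M<\infty$; hence $S_{\mathcal{F}_{\mathcal{K}}}^{-1}$ is again bounded, positive, self-adjoint and invertible, with $\tfrac1M\|x\|\le\|S_{\mathcal{F}_{\mathcal{K}}}^{-1}x\|\le\tfrac1m\|x\|$ for all $x$.

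Next I would observe that, since $S_{\mathcal{F}_{\mathcal{K}}}^{-1}$ is bounded and invertible, $\widetilde{V_{w,v}}=S_{\mathcal{F}_{\mathcal{K}}}^{-1}V_{w,v}$ is a closed subspace, the operators $\widetilde{\Lambda_{w}}=S_{\mathcal{F}_{\mathcal{K}}}^{-1}P_{V_{w,v}}\Lambda_{w}$ are bounded as compositions of bounded operators, and for every $f\in H$ the vector $\widetilde{\Lambda_{w}}\pi_{W_{w}}f=S_{\mathcal{F}_{\mathcal{K}}}^{-1}P_{V_{w,v}}\Lambda_{w}\pi_{W_{w}}f$ already lies in $\widetilde{V_{w,v}}$, because $P_{V_{w,v}}\Lambda_{w}\pi_{W_{w}}f\in V_{w,v}$. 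Consequently
\[
P_{\widetilde{V_{w,v}}}\widetilde{\Lambda_{w}}\pi_{W_{w}}f=\widetilde{\Lambda_{w}}\pi_{W_{w}}f=S_{\mathcal{F}_{\mathcal{K}}}^{-1}P_{V_{w,v}}\Lambda_{w}\pi_{W_{w}}f,
\]
and combining this with the first paragraph gives $\tfrac1{M^{2}}\|P_{V_{w,v}}\Lambda_{w}\pi_{W_{w}}f\|^{2}\le\|P_{\widetilde{V_{w,v}}}\widetilde{\Lambda_{w}}\pi_{W_{w}}f\|^{2}\le\tfrac1{m^{2}}\|P_{V_{w,v}}\Lambda_{w}\pi_{W_{w}}f\|^{2}$. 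Multiplying by $\alpha_{w,v}^{2}$, integrating in $d\mu(v)\,d\mu(w)$, and invoking \eqref{eq1} for $\mathcal{R}$ with bounds $A,B$ yields
\[
\frac{A}{M^{2}}\|f\|^{2}\le\int_{\Omega}\int_{\Omega_{w}}\alpha_{w,v}^{2}\|P_{\widetilde{V_{w,v}}}\widetilde{\Lambda_{w}}\pi_{W_{w}}f\|^{2}\,d\mu(v)\,d\mu(w)\le\frac{B}{m^{2}}\|f\|^{2};
\]
together with the strong measurability of $\{\widetilde{\Lambda_{w}}f\}$, $\{\pi_{W_{w}}f\}$ and $\{P_{\widetilde{V_{w,v}}}f\}$ (which transfer from $\mathcal{R}$ by continuity of $S_{\mathcal{F}_{\mathcal{K}}}^{-1}$ and the fact that $w,v\mapsto P_{\widetilde{V_{w,v}}}$ inherits strong measurability from $w,v\mapsto P_{V_{w,v}}$), this shows $\widetilde{\mathcal{R}}$ is a continuous relay fusion frame for $H$ with bounds $A/M^{2}$ and $B/m^{2}$.

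Finally, I would apply to $\widetilde{\mathcal{R}}$ the theorem asserting that a relay fusion frame operator is bounded, positive, self-adjoint and invertible: using $P_{\widetilde{V_{w,v}}}\widetilde{\Lambda_{w}}=\widetilde{\Lambda_{w}}$ from the second paragraph, the frame operator of $\widetilde{\mathcal{R}}$ is exactly $S_{\widetilde{\mathcal{R}}}f=\int_{\Omega}\int_{\Omega_{w}}\alpha_{w,v}^{2}\pi_{W_{w}}\widetilde{\Lambda_{w}}^{\ast}\widetilde{\Lambda_{w}}\pi_{W_{w}}f\,d\mu(v)\,d\mu(w)$, and writing $f=S_{\widetilde{\mathcal{R}}}^{-1}S_{\widetilde{\mathcal{R}}}f$ and $f=S_{\widetilde{\mathcal{R}}}S_{\widetilde{\mathcal{R}}}^{-1}f$ and then moving the bounded operator $S_{\widetilde{\mathcal{R}}}^{-1}$ inside the integrals produces precisely the two displayed identities. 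The one point requiring care --- everything else being a direct application of the earlier results --- is the identification $P_{\widetilde{V_{w,v}}}\widetilde{\Lambda_{w}}=\widetilde{\Lambda_{w}}$, which is what makes the frame operator of $\widetilde{\mathcal{R}}$ coincide with the operator $\pi_{W_{w}}\widetilde{\Lambda_{w}}^{\ast}\widetilde{\Lambda_{w}}\pi_{W_{w}}$ appearing in the statement, together with checking the strong measurability of $w,v\mapsto P_{\widetilde{V_{w,v}}}$.
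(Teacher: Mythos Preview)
Your proposal is correct and follows essentially the same route as the paper: both arguments rest on the identification $P_{\widetilde{V_{w,v}}}\widetilde{\Lambda_{w}}\pi_{W_{w}}f=S_{\mathcal{F}_{\mathcal{K}}}^{-1}P_{V_{w,v}}\Lambda_{w}\pi_{W_{w}}f$, then bound this above and below by $\|S_{\mathcal{F}_{\mathcal{K}}}^{-1}\|$ and $1/\|S_{\mathcal{F}_{\mathcal{K}}}\|$ (your $1/m$ and $1/M$) against the original frame inequality, and finally apply $f=S_{\widetilde{\mathcal{R}}}^{-1}S_{\widetilde{\mathcal{R}}}f=S_{\widetilde{\mathcal{R}}}S_{\widetilde{\mathcal{R}}}^{-1}f$ together with that same identification to obtain the two reconstruction formulas. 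Your version is in fact slightly more careful, since you justify explicitly why the projection $P_{\widetilde{V_{w,v}}}$ acts trivially on the range of $\widetilde{\Lambda_{w}}$ and you flag the strong-measurability conditions, points the paper passes over silently.
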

\begin{proof}
	For each $f\in H$, we have 
	\begin{align*}
		\int_{\Omega}\int_{\Omega_{w}}\alpha_{w,v}^{2}\|P_{S_{\mathcal{F}_{\mathcal{K}}}^{-1}V_{w,v}}S_{\mathcal{F}_{\mathcal{K}}}^{-1}P_{V_{w,v}}\Lambda_{w}\pi_{W_{w}}f\|^{2}d\mu(v)d\mu(w)&=\int_{\Omega}\int_{\Omega_{w}}\alpha_{w,v}^{2}\|S_{\mathcal{F}_{\mathcal{K}}}^{-1}P_{V_{w,v}}\Lambda_{w}\pi_{W_{w}}f\|^{2}d\mu(v)d\mu(w)\\&\leq \|S_{\mathcal{F}_{\mathcal{K}}}^{-1}\|^{2}B\|f\|^{2}.
	\end{align*}
On the other hand, we have 
\begin{align*}
	\int_{\Omega}\int_{\Omega_{w}}\alpha_{w,v}^{2}\|P_{S_{\mathcal{F}_{\mathcal{K}}}^{-1}V_{w,v}}S_{\mathcal{F}_{\mathcal{K}}}^{-1}P_{V_{w,v}}\Lambda_{w}\pi_{W_{w}}f\|^{2}d\mu(v)d\mu(w)&=\int_{\Omega}\int_{\Omega_{w}}\alpha_{w,v}^{2}\|S_{\mathcal{F}_{\mathcal{K}}}^{-1}P_{V_{w,v}}\Lambda_{w}\pi_{W_{w}}f\|^{2}d\mu(v)d\mu(w)\\&\geq \int_{\Omega}\int_{\Omega_{w}}\alpha_{w,v}^{2}\frac{1}{\|S_{\mathcal{F}_{\mathcal{K}}}\|^{2}}\|P_{V_{w,v}}\Lambda_{w}\pi_{W_{w}}f\|^{2}d\mu(v)d\mu(w)\\&\geq \frac{A}{\|S_{\mathcal{F}_{\mathcal{K}}}\|^{2}}\|f\|^{2}.
\end{align*}
Further, since $S_{\widetilde{\mathcal{R}}}$ is invertible, for all $f\in H$ we have 
\begin{align*}
	f&=S_{\widetilde{\mathcal{R}}}^{-1}S_{\widetilde{\mathcal{R}}}f=S_{\widetilde{\mathcal{R}}}S_{\widetilde{\mathcal{R}}}^{-1}f\\&=\int_{\Omega}\int_{\Omega_{w}}\alpha_{w,v}^{2}S_{\widetilde{\mathcal{R}}}^{-1}\pi_{W_{w}}\widetilde{\Lambda_{w}}^{\ast}P_{\widetilde{V_{w,v}}}\widetilde{\Lambda_{w}}\pi_{W_{w}}fd\mu(v)d\mu(w)\\&=\int_{\Omega}\int_{\Omega_{w}}\alpha_{w,v}^{2}S_{\widetilde{\mathcal{R}}}^{-1}\pi_{W_{w}}\Lambda_{w}^{\ast}P_{V_{w,v}}S^{-1}_{\mathcal{F}_{\mathcal{K}}}P_{S^{-1}_{\mathcal{F}_{\mathcal{K}}}V_{w,v}}S^{-1}_{\mathcal{F}_{\mathcal{K}}}P_{V_{w,v}}\Lambda_{w}\pi_{W_{w}}fd\mu(v)d\mu(w)\\&=\int_{\Omega}\int_{\Omega_{w}}\alpha_{w,v}^{2}S_{\widetilde{\mathcal{R}}}^{-1}\pi_{W_{w}}\Lambda_{w}^{\ast}P_{V_{w,v}}S^{-1}_{\mathcal{F}_{\mathcal{K}}}S^{-1}_{\mathcal{F}_{\mathcal{K}}}P_{V_{w,v}}\Lambda_{w}\pi_{W_{w}}fd\mu(v)d\mu(w)\\&=\int_{\Omega}\int_{\Omega_{w}}\alpha_{w,v}^{2}S_{\widetilde{\mathcal{R}}}^{-1}\pi_{W_{w}}\widetilde{\Lambda_{w}}^{\ast}\widetilde{\Lambda_{w}}\pi_{W_{w}}fd\mu(v)d\mu(w)\\&=\int_{\Omega}\int_{\Omega_{w}}\alpha_{w,v}^{2}\pi_{W_{w}}\widetilde{\Lambda_{w}}^{\ast}\widetilde{\Lambda_{w}}\pi_{W_{w}}S_{\widetilde{\mathcal{R}}}^{-1}fd\mu(v)d\mu(w).
\end{align*}

\end{proof}
\subsection{Local continuous relay dual of continuous relay fusion frames}
Let $\widehat{V_{w,v}}=S^{-1}_{w}V_{w,v}$ and $\widehat{\Lambda_{w}}=S^{-1}_{w}P_{w,v}\Lambda_{w}$, where $S_{w}$ denote the frame operators with respect to $K_{w}$ for each $w\in \Omega$ and we call every $S_{w}$ local continuous relay frame operator. we now prove that $\widehat{\mathcal{R}}=\{W_{w},\widehat{V_{w,v}},\widehat{\Lambda_{w}},\alpha_{w,v}\}_{w\in\Omega,v\in\Omega_{w}}$ is also an continuous relay fusion frame for $H$ and we call $\widehat{\mathcal{R}}$ the local continuous relay dual continuous relay fusion frame of $\mathcal{R}$.

\begin{theorem}
	Let $\mathcal{R}$ be an continuous relay fusion frame for $H$. Then $\widehat{\mathcal{R}}$ is an continuous relay fusion frame for $H$ and, for all $f\in H$,
	\begin{equation*}
		f=\int_{\Omega}\int_{\Omega_{w}}\alpha_{w,v}^{2}S^{-1}_{\widehat{\mathcal{R}}}\pi_{W_{w}}\widehat{\Lambda_{w}}^{\ast}\widehat{\Lambda_{w}}\pi_{W_{w}}fd\mu(v)d\mu(w)=\int_{\Omega}\int_{\Omega_{w}}\alpha_{w,v}^{2}\pi_{W_{w}}\widehat{\Lambda_{w}}^{\ast}\widehat{\Lambda_{w}}\pi_{W_{w}}S^{-1}_{\widehat{\mathcal{R}}}fd\mu(v)d\mu(w).
	\end{equation*}
\end{theorem}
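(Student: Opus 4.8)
The plan is to run the same argument that established the global continuous relay dual $\widetilde{\mathcal{R}}$, with the single operator $S_{\mathcal{F}_{\mathcal{K}}}$ replaced by the fibrewise local frame operators $S_w$ on $K_w$: first verify the two-sided estimate \eqref{eq1} for $\widehat{\mathcal{R}}$, then invoke the theorem that the frame operator of a continuous relay fusion frame is bounded, positive, self-adjoint and invertible, and finally obtain the two reconstruction identities by writing $f=S_{\widehat{\mathcal{R}}}^{-1}S_{\widehat{\mathcal{R}}}f=S_{\widehat{\mathcal{R}}}S_{\widehat{\mathcal{R}}}^{-1}f$ and expanding $S_{\widehat{\mathcal{R}}}$.

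The computational core is that $P_{\widehat{V_{w,v}}}$ acts as the identity on the range of $\widehat{\Lambda_w}$. Indeed $\widehat{\Lambda_w}\pi_{W_w}f=S_w^{-1}P_{V_{w,v}}\Lambda_w\pi_{W_w}f$ belongs to $S_w^{-1}V_{w,v}=\widehat{V_{w,v}}$ (a closed subspace, since $S_w$ is boundedly invertible on $K_w$), so $P_{\widehat{V_{w,v}}}\widehat{\Lambda_w}\pi_{W_w}f=\widehat{\Lambda_w}\pi_{W_w}f$; equivalently, applying the lemma of \cite{GAV} with the self-adjoint operator $A=S_w^{-1}$ and $V=V_{w,v}$ yields $S_w^{-1}P_{V_{w,v}}=P_{\widehat{V_{w,v}}}S_w^{-1}P_{V_{w,v}}$. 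Hence the $\widehat{\mathcal{R}}$-integrand in \eqref{eq1} reduces to $\alpha_{w,v}^{2}\|S_w^{-1}P_{V_{w,v}}\Lambda_w\pi_{W_w}f\|^{2}$, which I would then sandwich by $\|S_w\|^{-2}\|P_{V_{w,v}}\Lambda_w\pi_{W_w}f\|^{2}$ from below and $\|S_w^{-1}\|^{2}\|P_{V_{w,v}}\Lambda_w\pi_{W_w}f\|^{2}$ from above, integrate against $\alpha_{w,v}^{2}\,d\mu(v)d\mu(w)$, and compare with the bounds $A,B$ of $\mathcal{R}$.

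The one genuine obstacle is precisely at this integration step: the factors $\|S_w\|$ and $\|S_w^{-1}\|$ depend on $w$, so to extract them one must assume the local frames for the $K_w$ have bounds independent of $w$, say $0<c\le d<\infty$ with $c\,I_{K_w}\le S_w\le d\,I_{K_w}$ for all $w\in\Omega$ (together with measurability of $w\mapsto S_w^{-1}$, hence of $w\mapsto\widehat{\Lambda_w}f$ and of $w\mapsto P_{\widehat{V_{w,v}}}g$). Under this uniformity, $\|S_w^{-1}\|\le 1/c$ and $\|S_w\|\le d$, so $\widehat{\mathcal{R}}$ is a continuous relay fusion frame with bounds $A/d^{2}$ and $B/c^{2}$, and I would state this hypothesis explicitly. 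Once $\widehat{\mathcal{R}}$ is a continuous relay fusion frame, $S_{\widehat{\mathcal{R}}}$ is invertible by the earlier theorem; substituting its definition into $f=S_{\widehat{\mathcal{R}}}^{-1}S_{\widehat{\mathcal{R}}}f=S_{\widehat{\mathcal{R}}}S_{\widehat{\mathcal{R}}}^{-1}f$, simplifying $P_{\widehat{V_{w,v}}}\widehat{\Lambda_w}\pi_{W_w}=\widehat{\Lambda_w}\pi_{W_w}$ as above (and, if one expands $\widehat{\Lambda_w}^{\ast}\widehat{\Lambda_w}=\Lambda_w^{\ast}P_{V_{w,v}}S_w^{-1}S_w^{-1}P_{V_{w,v}}\Lambda_w$, the same lemma of \cite{GAV} deletes the intermediate projection), produces the two displayed formulas. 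This closing manipulation is the routine bookkeeping transcribed from the proof of the global dual theorem and should present no difficulty.
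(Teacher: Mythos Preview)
Your proposal is correct and follows essentially the same route as the paper: reduce $\|P_{\widehat{V_{w,v}}}\widehat{\Lambda_w}\pi_{W_w}f\|$ to $\|S_w^{-1}P_{V_{w,v}}\Lambda_w\pi_{W_w}f\|$, sandwich by $\|S_w\|^{-1}$ and $\|S_w^{-1}\|$, integrate, and then expand $f=S_{\widehat{\mathcal{R}}}^{-1}S_{\widehat{\mathcal{R}}}f=S_{\widehat{\mathcal{R}}}S_{\widehat{\mathcal{R}}}^{-1}f$ using the same simplification $P_{\widehat{V_{w,v}}}\widehat{\Lambda_w}\pi_{W_w}=\widehat{\Lambda_w}\pi_{W_w}$. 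The only difference is that the paper simply writes the bounds as $\max_{w\in\Omega}\{\|S_w^{-1}\|^{2}\}B$ and $\min_{w\in\Omega}\{1/\|S_w\|^{2}\}A$ without isolating the uniformity hypothesis you (rightly) flag as necessary for these extrema to be finite and positive.
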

\begin{proof}
	For all $f\in H$ we have 
	\begin{align*}
		\int_{\Omega}\int_{\Omega_{w}}\alpha_{w,v}^{2}\|P_{S^{-1}_{w}V_{w,v}}S^{-1}_{w}P_{V_{w,v}}\Lambda_{w}\pi_{W_{w}}f\|^{2}d\mu(v)d\mu(w)&=\int_{\Omega}\int_{\Omega_{w}}\alpha_{w,v}^{2}\|S^{-1}_{w}P_{V_{w,v}}\Lambda_{w}\pi_{W_{w}}f\|^{2}d\mu(v)d\mu(w)\\&\leq \max_{w\in\Omega}\{\|S^{-1}_{w}\|^{2}\}B\|f\|^{2}.
	\end{align*}
On the other hand for each $f\in H$ we have 
\begin{align*}
		\int_{\Omega}\int_{\Omega_{w}}\alpha_{w,v}^{2}\|P_{S^{-1}_{w}V_{w,v}}S^{-1}_{w}P_{V_{w,v}}\Lambda_{w}\pi_{W_{w}}f\|^{2}d\mu(v)d\mu(w)&=\int_{\Omega}\int_{\Omega_{w}}\alpha_{w,v}^{2}\|S^{-1}_{w}P_{V_{w,v}}\Lambda_{w}\pi_{W_{w}}f\|^{2}d\mu(v)d\mu(w)\\&\geq \int_{\Omega}\int_{\Omega_{w}}\alpha_{w,v}^{2}\frac{1}{\|S_{w}\|^{2}}\|P_{V_{w,v}}\Lambda_{w}\pi_{W_{w}}f\|^{2}d\mu(v)d\mu(w)\\&\geq \min_{w\in\Omega}\biggl\{\frac{1}{\|S_{w}\|^{2}}\biggr\}A\|f\|^{2}.
\end{align*}
Since $S_{\widehat{\mathcal{R}}}$ is invertible, for all $f\in H$ we have 
\begin{align*}
	f&=S_{\widehat{\mathcal{R}}}^{-1}S_{\widehat{\mathcal{R}}}f=S_{\widehat{\mathcal{R}}}S_{\widehat{\mathcal{R}}}^{-1}f\\&=\int_{\Omega}\int_{\Omega_{w}}\alpha_{w,v}^{2}S^{-1}_{\widehat{\mathcal{R}}}\pi_{W_{w}}\widehat{\Lambda_{w}}^{\ast}P_{\widehat{V_{w,v}}}\widehat{\Lambda_{w}}\pi_{W_{w}}d\mu(v)d\mu(w)\\&=\int_{\Omega}\int_{\Omega_{w}}\alpha_{w,v}^{2}S_{\widehat{\mathcal{R}}}^{-1}\pi_{W_{w}}\Lambda_{w}^{\ast}P_{V_{w,v}}S^{-1}_{w}P_{S^{-1}_{w}V_{w,v}}S^{-1}_{w}P_{V_{w,v}}\Lambda_{w}\pi_{W_{w}}fd\mu(v)d\mu(w)\\&=\int_{\Omega}\int_{\Omega_{w}}\alpha_{w,v}^{2}S_{\widehat{\mathcal{R}}}^{-1}\pi_{W_{w}}\widehat{\Lambda_{w}}^{\ast}\widehat{\Lambda_{w}}\pi_{W_{w}}fd\mu(v)d\mu(w)\\&=\int_{\Omega}\int_{\Omega_{w}}\alpha_{w,v}^{2}\pi_{W_{w}}\widehat{\Lambda_{w}}^{\ast}\widehat{\Lambda_{w}}\pi_{W_{w}}S_{\widehat{\mathcal{R}}}^{-1}fd\mu(v)d\mu(w).
\end{align*}
\end{proof}
\subsection{continuous canonical dual of continuous relay fusion frames}
Now let $\mathring{W_{w}}=S^{-1}_{\mathcal{R}}W_{w}$ and $\mathring{\Lambda_{w}}=\Lambda_{w}\pi_{W_{w}}S^{-1}_{\mathcal{R}}$, where $S_{\mathcal{R}}$ is the frame operator for $\mathcal{R}$. we prove that $\mathring{\mathcal{R}}=\{\mathring{W_{w}},V_{w,v},\mathring{\Lambda_{w}},\alpha_{w,v}\}_{w\in\Omega,v\in\Omega_{w}}$ is also an continuous relay fusion frame for $H$ and we call $\mathring{\mathcal{R}}$ the continuous canonical dual continuous relay fusion frame of $\mathcal{R}$ for $H$.

\begin{theorem}
	Let $\mathcal{R}$ be an continuous relay fusion frame for $H$. Then $\mathring{\mathcal{R}}$ is an continuous relay fusion frame for $H$.
\end{theorem}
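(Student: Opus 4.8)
The plan is to realize $\mathring{\mathcal{R}}$ as the frame $\mathcal{R}$ precomposed with the bounded invertible operator $S_{\mathcal{R}}^{-1}$, and then to transport the inequality \eqref{eq1} for $\mathcal{R}$ across $S_{\mathcal{R}}^{-1}$. First I would record the structural facts: by the theorem on the frame operator, $S_{\mathcal{R}}$ is bounded, self-adjoint, positive and invertible with $AI_H\le S_{\mathcal{R}}\le BI_H$, hence $S_{\mathcal{R}}^{-1}$ is bounded, self-adjoint and invertible, $\|S_{\mathcal{R}}\|\le B$ and $\|S_{\mathcal{R}}^{-1}\|\le 1/A$. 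Consequently each $\mathring{W_w}=S_{\mathcal{R}}^{-1}W_w$ is a closed subspace of $H$ (the image of a closed subspace under a bounded operator with bounded inverse), $\mathring{\Lambda_w}=\Lambda_w\pi_{W_w}S_{\mathcal{R}}^{-1}\in B(H,K_w)$, and $\overline{S_{\mathcal{R}}^{-1}W_w}=S_{\mathcal{R}}^{-1}W_w=\mathring{W_w}$, so $\mathring{\mathcal{R}}$ is a well-formed candidate.

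The key algebraic step is to absorb the projection $\pi_{\mathring{W_w}}$ occurring in the frame sum. I would apply the lemma stated above, $\pi_V A^\ast=\pi_V A^\ast\pi_{\overline{AV}}$, with $A=S_{\mathcal{R}}^{-1}$ (so $A^\ast=S_{\mathcal{R}}^{-1}$ and $\overline{AV}=\mathring{W_w}$) and $V=W_w$, obtaining $\pi_{W_w}S_{\mathcal{R}}^{-1}=\pi_{W_w}S_{\mathcal{R}}^{-1}\pi_{\mathring{W_w}}$. Left‑multiplying by $\Lambda_w$ gives, for every $w\in\Omega$,
\begin{equation*}
\mathring{\Lambda_w}\pi_{\mathring{W_w}}=\Lambda_w\pi_{W_w}S_{\mathcal{R}}^{-1}\pi_{\mathring{W_w}}=\Lambda_w\pi_{W_w}S_{\mathcal{R}}^{-1}.
\end{equation*}

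With this identity in hand, for fixed $f\in H$ I would set $g=S_{\mathcal{R}}^{-1}f$ and rewrite the frame quantity for $\mathring{\mathcal{R}}$ as $\int_\Omega\int_{\Omega_w}\alpha_{w,v}^2\|P_{V_{w,v}}\Lambda_w\pi_{W_w}g\|^2\,d\mu(v)\,d\mu(w)$, which by \eqref{eq1} for $\mathcal{R}$ lies between $A\|g\|^2$ and $B\|g\|^2$. The elementary estimates $\|g\|\le\|S_{\mathcal{R}}^{-1}\|\,\|f\|$ and $\|f\|=\|S_{\mathcal{R}}g\|\le\|S_{\mathcal{R}}\|\,\|g\|$, combined with $\|S_{\mathcal{R}}\|\le B$ and $\|S_{\mathcal{R}}^{-1}\|\le 1/A$, then yield the two-sided inequality with explicit constants $A/B^2$ and $B/A^2$. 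It remains only to verify the measurability clauses (1)--(3) for $\mathring{\mathcal{R}}$: clause (3) is literally the same as for $\mathcal{R}$ since the spaces $V_{w,v}$ and weights $\alpha_{w,v}$ are unchanged; clause (1) holds because $\mathring{\Lambda_w}f=\Lambda_w\pi_{W_w}(S_{\mathcal{R}}^{-1}f)$ and strong measurability of $w\mapsto\Lambda_w\pi_{W_w}h$ follows from clauses (1)--(2) for $\mathcal{R}$ together with boundedness of the $\Lambda_w$; and clause (2) follows by expressing $\pi_{\mathring{W_w}}$ in terms of $\pi_{W_w}$ and the fixed bounded invertible operator $S_{\mathcal{R}}^{-1}$.

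The only step I expect to be genuinely delicate is the application of the lemma: one must take $A=S_{\mathcal{R}}^{-1}$ precisely so that $\overline{AW_w}$ coincides with $\mathring{W_w}$, which is exactly what licenses dropping $\pi_{\mathring{W_w}}$. Once that projection has been absorbed, the statement reduces to the already-proved inequality \eqref{eq1} read along the range of $S_{\mathcal{R}}^{-1}$ (which is all of $H$), and the measurability bookkeeping in the last step is routine.
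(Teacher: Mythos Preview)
Your proof is correct and shares the paper's key algebraic move: both apply the lemma $\pi_V A^\ast=\pi_V A^\ast\pi_{\overline{AV}}$ with $A=S_{\mathcal{R}}^{-1}$ to obtain $\pi_{W_w}S_{\mathcal{R}}^{-1}\pi_{\mathring{W_w}}=\pi_{W_w}S_{\mathcal{R}}^{-1}$, thereby reducing the frame quantity for $\mathring{\mathcal{R}}$ at $f$ to the frame quantity for $\mathcal{R}$ at $g=S_{\mathcal{R}}^{-1}f$. The upper bound is then obtained identically in both arguments, yielding $\|S_{\mathcal{R}}^{-1}\|^2 B$.

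The difference lies in the lower bound. You argue directly: the $\mathcal{R}$-frame inequality gives $A\|g\|^2$ from below, and $\|f\|=\|S_{\mathcal{R}}g\|\le B\|g\|$ yields the lower constant $A/B^2$. The paper instead invokes the reconstruction formula $f=S_{\mathcal{R}}S_{\mathcal{R}}^{-1}f$, expands $\|f\|^4=|\langle f,f\rangle|^2$ as a double integral, and applies Cauchy--Schwarz to split it into the $\mathring{\mathcal{R}}$-quantity times the $\mathcal{R}$-quantity; bounding the latter by $B\|f\|^2$ gives the sharper lower constant $1/B$. Your route is more elementary and self-contained, while the paper's Cauchy--Schwarz trick buys a better constant (since $A/B^2\le 1/B$). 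Both are valid; the paper also omits the measurability verifications you sketch.
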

\begin{proof}
	We have for all $f\in H$
	\begin{align*}
		\int_{\Omega}\int_{\Omega_{w}}\alpha_{w,v}^{2}\|P_{V_{w,v}}\Lambda_{w}\pi_{W_{w}}S^{-1}_{\mathcal{R}}\pi_{S^{-1}_{\mathcal{R}}W_{w}}f\|^{2}d\mu(v)d\mu(w)&=\int_{\Omega}\int_{\Omega_{w}}\alpha_{w,v}^{2}\|P_{V_{w,v}}\Lambda_{w}\pi_{S^{-1}_{\mathcal{R}}}f\|^{2}d\mu(v)d\mu(w)\\&\leq \|S^{-1}_{\mathcal{R}}\|^{2}B\|f\|^{2}.
	\end{align*}
On the other hand we have 
\begin{align*}
	\|f\|^{4}&=\bigg|\langle \int_{\Omega}\int_{\Omega_{w}}\alpha_{w,v}^{2}\pi_{W_{w}}\Lambda_{w}^{\ast}P_{V_{w,v}}\Lambda_{w}\pi_{W_{w}}S^{-1}_{\mathcal{R}}fd\mu(v)d\mu(w),f\rangle\bigg|^{2}\\&=\bigg|\int_{\Omega}\int_{\Omega_{w}}\alpha_{w,v}^{2}\langle P_{V_{w,v}}\Lambda_{w}\pi_{W_{w}}S^{-1}_{\mathcal{R}}f,P_{V_{w,v}}\Lambda_{w}\pi_{W_{w}}f\rangle d\mu(v)d\mu(w)\bigg|^{2}\\&\leq \biggl(\int_{\Omega}\int_{\Omega_{w}}\alpha_{w,v}^{2}\|P_{V_{w,v}}\Lambda_{w}\pi_{W_{w}}S^{-1}_{\mathcal{R}}\pi_{S^{-1}_{\mathcal{R}}W_{w}}f\|^{2}d\mu(v)d\mu(w)\biggr)\\&\quad\quad\quad .\biggl(\int_{\Omega}\int_{\Omega_{w}}\alpha_{w,v}^{2}\|P_{V_{w,v}}\Lambda_{w}\pi_{W_{w}}f\|^{2}d\mu(v)d\mu(w)\biggr)\\&\leq \biggl(\int_{\Omega}\int_{\Omega_{w}}\alpha_{w,v}^{2}\|P_{V_{w,v}}\Lambda_{w}\pi_{W_{w}}S^{-1}_{\mathcal{R}}\pi_{S^{-1}_{\mathcal{R}}W_{w}}f\|^{2}d\mu(v)d\mu(w)\biggr)B\|f\|^{2}.
\end{align*}
Therefore, 
\begin{equation*}
	\frac{1}{B}\|f\|^{2}\leq \int_{\Omega}\int_{\Omega_{w}}\alpha_{w,v}^{2}\|P_{V_{w,v}}\Lambda_{w}\pi_{W_{w}}S^{-1}_{\mathcal{R}}\pi_{S^{-1}_{\mathcal{R}}W_{w}}f\|^{2}d\mu(v)d\mu(w).
\end{equation*}
\end{proof}
\begin{theorem}
	Let $\mathcal{R}$ be an continuous relay fusion frame for $H$ with frame operator $S_{\mathcal{R}}$ and let $\mathring{\mathcal{R}}$ be the continuous canonical dual continuous relay fusion frame of $\mathcal{R}$ with frame operator $S_{\mathring{\mathcal{R}}}$. Then $S_{\mathcal{R}}S_{\mathring{\mathcal{R}}}=I_{H}$ and $T_{\mathcal{R}}^{\ast}T_{\mathring{\mathcal{R}}}=I_{H}$ and, for all $f\in H$,
	\begin{equation*}
		f=\int_{\Omega}\int_{\Omega_{w}}\alpha_{w,v}^{2}\pi_{W_{w}}\Lambda_{w}^{\ast}P_{V_{w,v}}\mathring{\Lambda_{w}}\pi_{\mathring{W_{w}}}fd\mu(v)d\mu(w)=\int_{\Omega}\int_{\Omega_{w}}\alpha_{w,v}^{2}\pi_{\mathring{W_{w}}}\mathring{\Lambda_{w}}^{\ast}P_{w,v}\Lambda_{w}\pi_{W_{w}}fd\mu(v)d\mu(w).
	\end{equation*}
 \end{theorem}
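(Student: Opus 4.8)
The plan is to reduce all four assertions to a single structural identity, namely that $\pi_{W_{w}}S_{\mathcal{R}}^{-1}\pi_{\mathring{W_{w}}}=\pi_{W_{w}}S_{\mathcal{R}}^{-1}$ for every $w\in\Omega$. By the preceding theorem, $S_{\mathcal{R}}$ is bounded, self-adjoint and boundedly invertible (indeed $AI_{H}\le S_{\mathcal{R}}\le BI_{H}$), so $S_{\mathcal{R}}^{-1}$ carries the closed subspace $W_{w}$ onto the closed subspace $\mathring{W_{w}}=S_{\mathcal{R}}^{-1}W_{w}$; in particular $\overline{S_{\mathcal{R}}^{-1}W_{w}}=\mathring{W_{w}}$. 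Applying the lemma of \cite{GAV} with $V=W_{w}$ and $A=A^{\ast}=S_{\mathcal{R}}^{-1}$ then yields $\pi_{W_{w}}S_{\mathcal{R}}^{-1}=\pi_{W_{w}}S_{\mathcal{R}}^{-1}\pi_{\mathring{W_{w}}}$, and passing to adjoints gives the companion identity $S_{\mathcal{R}}^{-1}\pi_{W_{w}}=\pi_{\mathring{W_{w}}}S_{\mathcal{R}}^{-1}\pi_{W_{w}}$.

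Since $\mathring{\Lambda_{w}}=\Lambda_{w}\pi_{W_{w}}S_{\mathcal{R}}^{-1}$, and hence $\mathring{\Lambda_{w}}^{\ast}=S_{\mathcal{R}}^{-1}\pi_{W_{w}}\Lambda_{w}^{\ast}$, these two identities give the \emph{collapse relations} $\mathring{\Lambda_{w}}\pi_{\mathring{W_{w}}}=\Lambda_{w}\pi_{W_{w}}S_{\mathcal{R}}^{-1}$ and $\pi_{\mathring{W_{w}}}\mathring{\Lambda_{w}}^{\ast}=S_{\mathcal{R}}^{-1}\pi_{W_{w}}\Lambda_{w}^{\ast}$. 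Substituting both into the integral defining the frame operator $S_{\mathring{\mathcal{R}}}$ of $\mathring{\mathcal{R}}$, its integrand becomes $\alpha_{w,v}^{2}\,S_{\mathcal{R}}^{-1}\pi_{W_{w}}\Lambda_{w}^{\ast}P_{V_{w,v}}\Lambda_{w}\pi_{W_{w}}S_{\mathcal{R}}^{-1}f$; pulling the bounded operator $S_{\mathcal{R}}^{-1}$ out of the double integral on the left and recognizing the remaining integral as $S_{\mathcal{R}}(S_{\mathcal{R}}^{-1}f)$ shows $S_{\mathring{\mathcal{R}}}=S_{\mathcal{R}}^{-1}S_{\mathcal{R}}S_{\mathcal{R}}^{-1}=S_{\mathcal{R}}^{-1}$, and therefore $S_{\mathcal{R}}S_{\mathring{\mathcal{R}}}=I_{H}$.

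Next I would verify the two reconstruction formulas. Using $\mathring{\Lambda_{w}}\pi_{\mathring{W_{w}}}=\Lambda_{w}\pi_{W_{w}}S_{\mathcal{R}}^{-1}$, the first integral equals $\int_{\Omega}\int_{\Omega_{w}}\alpha_{w,v}^{2}\pi_{W_{w}}\Lambda_{w}^{\ast}P_{V_{w,v}}\Lambda_{w}\pi_{W_{w}}S_{\mathcal{R}}^{-1}f\,d\mu(v)\,d\mu(w)=S_{\mathcal{R}}(S_{\mathcal{R}}^{-1}f)=f$, the interchange of $S_{\mathcal{R}}^{-1}$ with the integral being legitimate just as in Lemma \ref{lem1}. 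Using $\pi_{\mathring{W_{w}}}\mathring{\Lambda_{w}}^{\ast}=S_{\mathcal{R}}^{-1}\pi_{W_{w}}\Lambda_{w}^{\ast}$, the second integral equals $S_{\mathcal{R}}^{-1}\int_{\Omega}\int_{\Omega_{w}}\alpha_{w,v}^{2}\pi_{W_{w}}\Lambda_{w}^{\ast}P_{V_{w,v}}\Lambda_{w}\pi_{W_{w}}f\,d\mu(v)\,d\mu(w)=S_{\mathcal{R}}^{-1}(S_{\mathcal{R}}f)=f$. Finally, $T_{\mathcal{R}}^{\ast}T_{\mathring{\mathcal{R}}}f=\int_{\Omega}\int_{\Omega_{w}}\alpha_{w,v}\pi_{W_{w}}\Lambda_{w}^{\ast}\bigl(\alpha_{w,v}P_{V_{w,v}}\mathring{\Lambda_{w}}\pi_{\mathring{W_{w}}}f\bigr)\,d\mu(v)\,d\mu(w)$ is exactly the first reconstruction integral, already shown to be $f$, so $T_{\mathcal{R}}^{\ast}T_{\mathring{\mathcal{R}}}=I_{H}$.

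The routine parts — commuting the bounded operator $S_{\mathcal{R}}^{-1}$ past the double integrals and verifying convergence of the integrals involved — go through exactly as in Lemma \ref{lem1} and the earlier discussion of the synthesis operator. The one genuinely delicate step, which I expect to be the main obstacle, is the first: noting that $\mathring{W_{w}}=S_{\mathcal{R}}^{-1}W_{w}$ is closed so that $\overline{S_{\mathcal{R}}^{-1}W_{w}}=\mathring{W_{w}}$ and the lemma of \cite{GAV} is applicable, and then transporting that identity together with its adjoint correctly through the definitions of $\mathring{\Lambda_{w}}$ and $\mathring{\Lambda_{w}}^{\ast}$. Once the two collapse relations are in hand, each of the four claims is a one-line computation.
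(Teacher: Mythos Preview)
Your proposal is correct and follows essentially the same approach as the paper: both arguments hinge on the Gavruta identity $\pi_{W_{w}}S_{\mathcal{R}}^{-1}=\pi_{W_{w}}S_{\mathcal{R}}^{-1}\pi_{\mathring{W_{w}}}$ and its adjoint to collapse $\mathring{\Lambda_{w}}\pi_{\mathring{W_{w}}}$ to $\Lambda_{w}\pi_{W_{w}}S_{\mathcal{R}}^{-1}$, after which everything reduces to $S_{\mathcal{R}}S_{\mathcal{R}}^{-1}=I_{H}$. You are simply more explicit than the paper about invoking the lemma and about the closedness of $\mathring{W_{w}}$, and you obtain the slightly sharper by-product $S_{\mathring{\mathcal{R}}}=S_{\mathcal{R}}^{-1}$ along the way.
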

\begin{proof}
	We have for all $f\in H$,
	\begin{align*}
		S_{\mathcal{R}}S_{\mathring{\mathcal{R}}}f&=S_{\mathcal{R}}\int_{\Omega}\int_{\Omega_{w}}\alpha_{w,v}^{2}\pi_{S^{-1}_{\mathcal{R}}W_{w}}S^{-1}_{\mathcal{R}}\pi_{W_{w}}\Lambda_{w}^{\ast}P_{V_{w,v}}\Lambda_{w}\pi_{W_{w}}S^{-1}_{\mathcal{R}}\pi_{S^{-1}_{\mathcal{R}}W_{w}}fd\mu(v)d\mu(w)\\&=S_{\mathcal{R}}\int_{\Omega}\int_{\Omega_{w}}\alpha_{w,v}^{2}S^{-1}_{\mathcal{R}}\pi_{W_{w}}\Lambda_{w}^{\ast}P_{V_{w,v}}\Lambda_{w}\pi_{W_{w}}S^{-1}_{\mathcal{R}}fd\mu(v)d\mu(w)\\&=\int_{\Omega}\int_{\Omega_{w}}\alpha_{w,v}^{2}\pi_{W_{w}}\Lambda_{w}^{\ast}P_{V_{w,v}}\Lambda_{w}\pi_{W_{w}}S^{-1}_{\mathcal{R}}fd\mu(v)d\mu(w)\\&=S_{\mathcal{R}}S_{\mathcal{R}}^{-1}f\\&=f
	\end{align*}
and 
\begin{align*}
	T_{\mathcal{R}}^{\ast}T_{\mathring{\mathcal{R}}}&=	T_{\mathcal{R}}^{\ast}\bigg(\{\alpha_{w,v}P_{V_{w,v}}\Lambda_{w}\pi_{W_{w}}S^{-1}_{\mathcal{R}}\pi_{S^{-1}_{\mathcal{R}}W_{w}}f\}_{w\in\Omega,v\in\Omega_{w}}\bigg)\\&=	T_{\mathcal{R}}^{\ast}\bigg(\{\alpha_{w,v}P_{V_{w,v}}\Lambda_{w}\pi_{W_{w}}S^{-1}_{\mathcal{R}}f\}_{w\in\Omega,v\in\Omega_{w}}\bigg)\\&=	T_{\mathcal{R}}^{\ast}T_{\mathcal{R}}S^{-1}_{\mathcal{R}}f\\&=f.
\end{align*}
The last assertion of the theorem follows from the previous steps of the proof.
\end{proof}
\begin{theorem}
	Let $\mathcal{R}$ be an continuous relay fusion frame with continuous canonical dual continuous relay fusion frame $\mathring{\mathcal{R}}$. Then, for any $g_{w,v}\in V_{w,v}$ satisfying $f=\int_{\Omega}\int_{\Omega_{w}}\alpha_{w,v}^{2}\pi_{W_{w}}\Lambda_{w}^{\ast}g_{w,v}d\mu(v)d\mu(w)$, we have 
	
	\begin{align*}
		\int_{\Omega}\int_{\Omega_{w}}\|g_{w,v}\|^{2}d\mu(v)d\mu(w)&=\int_{\Omega}\int_{\Omega_{w}}\alpha_{w,v}^{2}\|P_{V_{w,v}}\mathring{\Lambda_{w}}\pi_{\mathring{W_{w}}}f\|^{2}d\mu(v)d\mu(w)\\&\quad\quad+\int_{\Omega}\int_{\Omega_{w}}\|g_{w,v}-\alpha_{w,v}^{2}P_{V_{w,v}}\mathring{\Lambda_{w}}\pi_{\mathring{W_{w}}}f\|^{2}d\mu(v)d\mu(w).
	\end{align*} 
\end{theorem}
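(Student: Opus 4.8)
The plan is to carry over the classical fact that the canonical dual coefficients have the smallest $\ell^{2}$-norm among all coefficient families reconstructing $f$; concretely, the identity will fall out of the Pythagorean theorem in the coefficient space once the right orthogonal splitting is exhibited. Write $\mathcal{H}_{V}=\bigl(\int_{\Omega}\int_{\Omega_{w}}\oplus V_{w,v}\bigr)_{l^{2}}$ for the coefficient Hilbert space and set $c_{w,v}=P_{V_{w,v}}\mathring{\Lambda_{w}}\pi_{\mathring{W_{w}}}f$ for the canonical family attached to $f$ (this is, up to the bookkeeping of the weights $\alpha_{w,v}$, the vector subtracted in the statement). Using the lemma of \cite{GAV} in the self-adjoint form $\pi_{W_{w}}S_{\mathcal{R}}^{-1}=\pi_{W_{w}}S_{\mathcal{R}}^{-1}\pi_{\mathring{W_{w}}}$ one rewrites $\mathring{\Lambda_{w}}\pi_{\mathring{W_{w}}}=\Lambda_{w}\pi_{W_{w}}S_{\mathcal{R}}^{-1}$, so that $c_{w,v}=P_{V_{w,v}}\Lambda_{w}\pi_{W_{w}}S_{\mathcal{R}}^{-1}f$ and, by the reconstruction formula for $\mathring{\mathcal{R}}$ established above, $f=\int_{\Omega}\int_{\Omega_{w}}\alpha_{w,v}^{2}\pi_{W_{w}}\Lambda_{w}^{\ast}c_{w,v}\,d\mu(v)d\mu(w)$.

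First I would read the hypothesis $f=\int_{\Omega}\int_{\Omega_{w}}\alpha_{w,v}^{2}\pi_{W_{w}}\Lambda_{w}^{\ast}g_{w,v}\,d\mu(v)d\mu(w)$ as $T_{\mathcal{R}}^{\ast}(\{\alpha_{w,v}g_{w,v}\})=f$, while the formula just recorded for $c_{w,v}$ says $T_{\mathcal{R}}^{\ast}(\{\alpha_{w,v}c_{w,v}\})=f$; hence $\{\alpha_{w,v}(g_{w,v}-c_{w,v})\}\in\ker T_{\mathcal{R}}^{\ast}$. On the other hand $\{\alpha_{w,v}c_{w,v}\}=\{\alpha_{w,v}P_{V_{w,v}}\Lambda_{w}\pi_{W_{w}}S_{\mathcal{R}}^{-1}f\}=T_{\mathcal{R}}(S_{\mathcal{R}}^{-1}f)\in\operatorname{Range}T_{\mathcal{R}}$ (this is the relay-fusion counterpart of the relation $T_{\mathcal{R}}^{\ast}T_{\mathring{\mathcal{R}}}=I_{H}$ already proved). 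Since $\ker T_{\mathcal{R}}^{\ast}=(\operatorname{Range}T_{\mathcal{R}})^{\perp}$, the families $\{\alpha_{w,v}c_{w,v}\}$ and $\{\alpha_{w,v}(g_{w,v}-c_{w,v})\}$ are orthogonal in $\mathcal{H}_{V}$. Equivalently, one can kill the cross term by hand: $\int_{\Omega}\int_{\Omega_{w}}\alpha_{w,v}^{2}\langle g_{w,v}-c_{w,v},c_{w,v}\rangle\,d\mu(v)d\mu(w)$ equals, after inserting $P_{V_{w,v}}$ (allowed since $g_{w,v}-c_{w,v}\in V_{w,v}$) and pushing $\Lambda_{w}$ and $\pi_{W_{w}}$ onto the other slot by self-adjointness, $\bigl\langle\int_{\Omega}\int_{\Omega_{w}}\alpha_{w,v}^{2}\pi_{W_{w}}\Lambda_{w}^{\ast}(g_{w,v}-c_{w,v})\,d\mu(v)d\mu(w),\,S_{\mathcal{R}}^{-1}f\bigr\rangle=\langle f-f,S_{\mathcal{R}}^{-1}f\rangle=0$.

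With the orthogonality in hand, the Pythagorean theorem in $\mathcal{H}_{V}$ gives $\|\{\alpha_{w,v}g_{w,v}\}\|^{2}=\|\{\alpha_{w,v}c_{w,v}\}\|^{2}+\|\{\alpha_{w,v}(g_{w,v}-c_{w,v})\}\|^{2}$, i.e.
\begin{align*}
&\int_{\Omega}\int_{\Omega_{w}}\alpha_{w,v}^{2}\|g_{w,v}\|^{2}\,d\mu(v)d\mu(w)\\
&\qquad=\int_{\Omega}\int_{\Omega_{w}}\alpha_{w,v}^{2}\|P_{V_{w,v}}\mathring{\Lambda_{w}}\pi_{\mathring{W_{w}}}f\|^{2}\,d\mu(v)d\mu(w)+\int_{\Omega}\int_{\Omega_{w}}\alpha_{w,v}^{2}\|g_{w,v}-P_{V_{w,v}}\mathring{\Lambda_{w}}\pi_{\mathring{W_{w}}}f\|^{2}\,d\mu(v)d\mu(w),
\end{align*}
which is the asserted identity once the powers of $\alpha_{w,v}$ are distributed over the three terms as in the statement.

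The algebra itself — inserting and deleting the projections $P_{V_{w,v}}$, moving $\Lambda_{w}$ and $\pi_{W_{w}}$ to the adjoint side, using self-adjointness of $S_{\mathcal{R}}^{-1}$ together with the \cite{GAV} lemma — is purely formal. The two points needing genuine care are: (i) justifying that the double integral over $\Omega\times\Omega_{w}$ commutes with the inner products of both $\mathcal{H}_{V}$ and $H$ and that Fubini applies, for which I would lean on the Bessel bound $B$ and Lemma \ref{lem1} to obtain absolute convergence and then repeat the weak-integral argument already used in the proof of the synthesis-operator proposition; and (ii) keeping the exponents of the weights $\alpha_{w,v}$ consistent so that the three terms come out exactly in the printed normalisation. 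This weight bookkeeping is the only delicate spot; the rest is the standard ``canonical coefficients minimise the norm'' argument.
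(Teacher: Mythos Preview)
Your argument is essentially the paper's: both reduce to the single cross-term identity
\[
\int_{\Omega}\int_{\Omega_{w}}\alpha_{w,v}^{2}\langle g_{w,v},c_{w,v}\rangle\,d\mu(v)d\mu(w)=\int_{\Omega}\int_{\Omega_{w}}\alpha_{w,v}^{2}\|c_{w,v}\|^{2}\,d\mu(v)d\mu(w),\qquad c_{w,v}=P_{V_{w,v}}\mathring{\Lambda_{w}}\pi_{\mathring{W_{w}}}f,
\]
obtained by rewriting the right side as $\langle S_{\mathring{\mathcal{R}}}f,f\rangle=\langle f,S_{\mathcal{R}}^{-1}f\rangle$ and then inserting the hypothesis on $f$. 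The paper's proof in fact stops at this cross-term computation and leaves the Pythagorean step implicit; your packaging of it as $\{\alpha_{w,v}c_{w,v}\}=T_{\mathcal{R}}(S_{\mathcal{R}}^{-1}f)\in\operatorname{Range}T_{\mathcal{R}}$ and $\{\alpha_{w,v}(g_{w,v}-c_{w,v})\}\in\ker T_{\mathcal{R}}^{\ast}=(\operatorname{Range}T_{\mathcal{R}})^{\perp}$ is a clean conceptual overlay, but the underlying calculation is identical.

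One caveat on your closing sentence. The identity you actually derive, with a uniform factor $\alpha_{w,v}^{2}$ on all three integrals, is the correct one and is precisely what the paper's cross-term computation supports. It does \emph{not} become the printed statement (no weight on the left-hand integral, an $\alpha_{w,v}^{2}$ placed \emph{inside} the norm on the last term) by any legitimate ``redistribution'' of exponents; expanding $\|g_{w,v}-\alpha_{w,v}^{2}c_{w,v}\|^{2}$ produces an $\alpha_{w,v}^{4}\|c_{w,v}\|^{2}$ term that cannot be matched. Those exponent placements are typographical artefacts of the stated theorem, not a gap in your argument, so your flag that ``weight bookkeeping is the only delicate spot'' is exactly right --- just be explicit that the printed exponents are inconsistent rather than suggesting they can be reconciled.
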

\begin{proof}
	For each $f\in H$, we have 
	\begin{align*}
		\int_{\Omega}\int_{\Omega_{w}}\alpha_{w,v}^{2}\|P_{V_{w,v}}\mathring{\Lambda_{w}}\pi_{\mathring{W_{w}}}f\|^{2}d\mu(v)d\mu(w)&=\langle S_{\mathring{\mathcal{R}}}f,f\rangle \\&=\langle f,S_{\mathcal{R}}^{-1}f\rangle\\&=\int_{\Omega}\int_{\Omega_{w}}\alpha_{w,v}^{2}\langle \pi_{W_{w}}\Lambda_{w}^{\ast}g_{w,v},S^{-1}_{\mathcal{R}}f\rangle d\mu(v)d\mu(w) \\&=\int_{\Omega}\int_{\Omega_{w}}\alpha_{w,v}^{2}\langle g_{w,v},P_{w,v}\Lambda_{w}\pi_{W_{w}}S^{-1}_{\mathcal{R}}f\rangle d\mu(v)d\mu(w)\\&=\int_{\Omega}\int_{\Omega_{w}}\alpha_{w,v}^{2}\langle g_{w,v},P_{w,v}\mathring{\Lambda_{w}}\pi_{\mathring{W_{w}}}f\rangle d\mu(v)d\mu(w)\\&=\int_{\Omega}\int_{\Omega_{w}}\alpha_{w,v}^{2}\langle P_{V_{w,v}}\mathring{\Lambda_{w}}\pi_{\mathring{W_{w}}}f,g_{w,v}\rangle d\mu(v)d\mu(w).
	\end{align*}
\end{proof}
\begin{example}
	Let $\mathcal{R}$ be an continuous relay fusion frame for $H$ and $S_{\mathcal{R}}$ denote the frame operator of $\mathcal{R}$. We have for each $f\in H$,
	\begin{align*}
		\|f\|^{2}&=\langle S_{\mathcal{R}}^{-\frac{1}{2}}S_{\mathcal{R}}S_{\mathcal{R}}^{-\frac{1}{2}}f,f\rangle\\&=\langle \int_{\Omega}\int_{\Omega_{w}}\alpha_{w,v}^{2}S_{\mathcal{R}}^{-\frac{1}{2}}\pi_{W_{w}}\Lambda_{w}^{\ast}P_{V_{w,v}}\Lambda_{w}\pi_{W_{w}}S_{\mathcal{R}}^{-\frac{1}{2}}fd\mu(v)d\mu(w),f\rangle\\&=\\&=\int_{\Omega}\int_{\Omega_{w}}\alpha_{w,v}^{2}\|P_{V_{w,v}}\Lambda_{w}\pi_{W_{w}}S_{\mathcal{R}}^{-\frac{1}{2}}f\|^{2}d\mu(v)d\mu(w)\\&=\int_{\Omega}\int_{\Omega_{w}}\alpha_{w,v}^{2}\|P_{V_{w,v}}\Lambda_{w}\pi_{W_{w}}S_{\mathcal{R}}^{-\frac{1}{2}}\pi_{S^{-\frac{1}{2}}_{\mathcal{R}}W_{w}}f\|^{2}d\mu(v)d\mu(w),
	\end{align*}
so, $\{S^{\frac{1}{2}}_{\mathcal{R}}W_{w},V_{w,v},\Lambda_{w}\pi_{W_{w}}S^{\frac{1}{2}}_{\mathcal{R}},\alpha_{w,v}\}_{w\in\Omega,v\in\Omega_{w}}$ is a Parseval continuous relay fusion frame for $H$. 
\end{example}
\begin{theorem}
	Let $\mathcal{R}$ be an continuous relay fusion frame for $H$ with continuous relay fusion frame operator $S_{\mathcal{R}}$ and let $Q\in B(H)$ be an invertible operator. Then $\mathcal{R}_{Q}=\{QW_{w},V_{w,v},\Lambda_{w},\alpha_{w,v}\}_{w\in\Omega,v\in\Omega_{w}}$ is an continuous relay fusion frame for $H$ with continuous relay fusion frame operator $S_{\mathcal{R}_{Q}}$ satisfying 
	\begin{equation*}
		\frac{QS_{\mathcal{R}}Q^{\ast}}{\|Q\|^{2}}\leq S_{\mathcal{R}_{Q}}\leq \|Q^{-1}\|^{2}QS_{\mathcal{R}}Q^{\ast}.
	\end{equation*}
\end{theorem}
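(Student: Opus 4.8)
The plan is to pass to the associated quadratic forms. By the definition of the relay fusion frame operator, for every $f\in H$,
$$\langle S_{\mathcal{R}_{Q}}f,f\rangle=\int_{\Omega}\int_{\Omega_{w}}\alpha_{w,v}^{2}\|P_{V_{w,v}}\Lambda_{w}\pi_{QW_{w}}f\|^{2}d\mu(v)d\mu(w),$$
while, since $\langle QS_{\mathcal{R}}Q^{\ast}f,f\rangle=\langle S_{\mathcal{R}}Q^{\ast}f,Q^{\ast}f\rangle$,
$$\langle QS_{\mathcal{R}}Q^{\ast}f,f\rangle=\int_{\Omega}\int_{\Omega_{w}}\alpha_{w,v}^{2}\|P_{V_{w,v}}\Lambda_{w}\pi_{W_{w}}Q^{\ast}f\|^{2}d\mu(v)d\mu(w).$$
So the whole statement reduces to comparing, for each $(w,v)$, the quantity $\|P_{V_{w,v}}\Lambda_{w}\pi_{QW_{w}}f\|$ with $\|P_{V_{w,v}}\Lambda_{w}\pi_{W_{w}}Q^{\ast}f\|$ with multiplicative constants $\|Q\|^{-1}$ and $\|Q^{-1}\|$, then integrating; once this is in hand, the frame property of $\mathcal{R}_{Q}$ (with bounds $A\|Q\|^{-2}\|Q^{-1}\|^{-2}$ and $B\|Q\|^{2}\|Q^{-1}\|^{2}$) follows by applying the frame inequality of $\mathcal{R}$ to $Q^{\ast}f$ and using $\|Q^{-1}\|^{-1}\|f\|\leq\|Q^{\ast}f\|\leq\|Q\|\,\|f\|$.

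The first genuine step is the comparison of the two projections. Applying the cited lemma $\pi_{V}A^{\ast}=\pi_{V}A^{\ast}\pi_{\overline{AV}}$ with $A=Q$ and $V=W_{w}$ (legitimate since $QW_{w}$ is closed, $Q$ being invertible) gives $\pi_{W_{w}}Q^{\ast}=\pi_{W_{w}}Q^{\ast}\pi_{QW_{w}}$, whence $\|\pi_{W_{w}}Q^{\ast}f\|\leq\|Q\|\,\|\pi_{QW_{w}}f\|$. For the reverse estimate, write $\pi_{QW_{w}}f=Q\pi_{W_{w}}\bigl(Q^{-1}\pi_{QW_{w}}f\bigr)$, which is valid because $Q^{-1}\pi_{QW_{w}}f\in W_{w}$, so that $\|\pi_{QW_{w}}f\|^{2}=\langle\pi_{QW_{w}}f,f\rangle=\langle Q^{-1}\pi_{QW_{w}}f,\pi_{W_{w}}Q^{\ast}f\rangle\leq\|Q^{-1}\|\,\|\pi_{QW_{w}}f\|\,\|\pi_{W_{w}}Q^{\ast}f\|$. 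Hence
$$\|Q\|^{-1}\|\pi_{W_{w}}Q^{\ast}f\|\leq\|\pi_{QW_{w}}f\|\leq\|Q^{-1}\|\,\|\pi_{W_{w}}Q^{\ast}f\|,\qquad f\in H;$$
in particular $\pi_{W_{w}}Q^{\ast}$ maps $QW_{w}$ isomorphically onto $W_{w}$ with norm at most $\|Q\|$ and inverse of norm at most $\|Q^{-1}\|$.

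The step I expect to be the main obstacle is upgrading these estimates on the projections to the same estimates with $P_{V_{w,v}}\Lambda_{w}$ inserted in front, which is what the relay structure needs. In the pure continuous fusion frame setting the operators $\Lambda_{w}$ and the subspaces $V_{w,v}$ are absent, and the displayed projection inequality is already the desired conclusion; here, however, $\Lambda_{w}$ is applied to the vector $\pi_{QW_{w}}f\in QW_{w}$ on the $S_{\mathcal{R}_{Q}}$ side and to the correspondent vector $\pi_{W_{w}}Q^{\ast}f\in W_{w}$ on the $QS_{\mathcal{R}}Q^{\ast}$ side, and it may compress or dilate these two vectors quite differently before $P_{V_{w,v}}$ acts, so the constants $\|Q\|^{\pm1}$ cannot simply be pulled through $\Lambda_{w}$. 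My proposed route is to substitute the identity $\pi_{QW_{w}}f=Q\pi_{W_{w}}Q^{-1}\pi_{QW_{w}}f$ directly inside $P_{V_{w,v}}\Lambda_{w}$ and to try to re-express $P_{V_{w,v}}\Lambda_{w}\pi_{QW_{w}}f$ as $P_{V_{w,v}}\Lambda_{w}$ evaluated at a vector controlled, via the isomorphism above, by $\pi_{W_{w}}Q^{\ast}f$; carrying this through the composition with $\Lambda_{w}$ — and, should it fail in full generality, isolating the extra hypothesis on the $\Lambda_{w}$ (e.g.\ a uniform lower bound on their restrictions to the subspaces in play) under which it holds — is where the analysis has to be done carefully. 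Once this transfer is established for the relay form, integrating in $(w,v)$ and invoking the reductions of the first paragraph yields both the two-sided operator bound and the fact that $\mathcal{R}_{Q}$ is a continuous relay fusion frame for $H$.
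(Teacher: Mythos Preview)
Your overall strategy---pass to the quadratic forms, invoke the projection identity $\pi_{W_{w}}Q^{\ast}=\pi_{W_{w}}Q^{\ast}\pi_{QW_{w}}$ and its companion $\pi_{QW_{w}}=\pi_{QW_{w}}(Q^{-1})^{\ast}\pi_{W_{w}}Q^{\ast}$, then try to carry the resulting norm bounds through $P_{V_{w,v}}\Lambda_{w}$---is exactly the route the paper takes. The paper does not do anything you have not already written down; in particular, it does \emph{not} resolve the obstacle you flag. At the key point it simply asserts
\[
\|P_{V_{w,v}}\Lambda_{w}\pi_{W_{w}}Q^{\ast}\pi_{QW_{w}}f\|\leq\|Q^{\ast}\|\,\|P_{V_{w,v}}\Lambda_{w}\pi_{QW_{w}}f\|
\]
and the reverse inequality with $(Q^{-1})^{\ast}$, exactly as if the operator $P_{V_{w,v}}\Lambda_{w}$ were not there.

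Your suspicion that this step fails in general is correct, and in fact the theorem as stated is false without further hypotheses on the $\Lambda_{w}$. Take $H=\mathbb{R}^{2}$, $\Omega=\{1,2\}$ with counting measure, $\Omega_{1}=\Omega_{2}=\{1\}$, $K_{1}=K_{2}=\mathbb{R}^{2}$, $W_{1}=\operatorname{span}\{e_{1}\}$, $W_{2}=\operatorname{span}\{e_{2}\}$, $V_{1,1}=V_{2,1}=\mathbb{R}^{2}$, $\alpha_{w,v}=1$, and $\Lambda_{w}=\pi_{W_{w}}$. Then $\mathcal{R}$ is a Parseval continuous relay fusion frame. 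For $Q=\begin{pmatrix}0&1\\1&0\end{pmatrix}$ we have $QW_{1}=W_{2}$ and $QW_{2}=W_{1}$, so
\[
P_{V_{w,1}}\Lambda_{w}\pi_{QW_{w}}f=\pi_{W_{w}}\pi_{QW_{w}}f=0\quad\text{for every }f\in H,
\]
and $\mathcal{R}_{Q}$ is not a frame at all. So your proposed remedy---isolate an extra hypothesis on the $\Lambda_{w}$ (for instance that each $\Lambda_{w}$ factor through $\pi_{W_{w}}$ only trivially, or that $\Lambda_{w}\pi_{W_{w}}=\Lambda_{w}$, in which case $P_{V_{w,v}}\Lambda_{w}\pi_{QW_{w}}$ collapses to $P_{V_{w,v}}\Lambda_{w}\pi_{W_{w}}\pi_{QW_{w}}$ and the paper's bound becomes legitimate)---is the right move, not a concession.
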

\begin{proof}
	For each $f\in H$ we have 
	\begin{equation*}
		\|P_{V_{w,v}}\Lambda_{w}\pi_{W_{w}}Q^{\ast}f\|=\|P_{V_{w,v}}\Lambda_{w}\pi_{W_{w}}Q^{\ast}\pi_{QW_{w}}f\|\leq \|Q^{\ast}\|\|P_{V_{w,v}}\Lambda_{w}\pi_{QW_{w}}f\|.
	\end{equation*}
Since $Q^{\ast}f\in H$ and $\mathcal{R}$ is an continuous relay fusion frame for $H$, we have 
\begin{align*}
	A\|Q^{\ast}f\|^{2}&\leq \int_{\Omega}\int_{\Omega_{w}}\alpha_{w,v}^{2}\|P_{V_{w,v}}\Lambda_{w}\pi_{W_{w}}Q^{\ast}f\|d\mu(v)d\mu(w)\\&\leq\|Q^{\ast}\|^{2}\int_{\Omega}\int_{\Omega_{w}}\alpha_{w,v}^{2}\|P_{V_{w,v}}\Lambda_{w}\pi_{QW_{w}}f\|^{2}d\mu(v)d\mu(w).   
\end{align*}
Thus
\begin{equation*}
	\frac{A}{\|Q^{\ast}\|^{2}\|(Q^{\ast})^{-1}\|^{2}}\|f\|^{2}\leq \int_{\Omega}\int_{\Omega_{w}}\alpha_{w,v}^{2}\|P_{w,v}\Lambda_{w}\pi_{QW_{w}}f\|^{2}d\mu(v)d\mu(w).
\end{equation*}
On the other hand, we have 
\begin{equation*}
	\pi_{QW_{w}}=\pi_{QW_{w}}(Q^{-1})^{\ast}\pi_{W_{w}}Q^{\ast}.
\end{equation*}
So 
\begin{equation*}
	\|P_{V_{w,v}}\Lambda_{w}\pi_{QW_{w}}f\|\leq \|(Q^{-1})^{\ast}\|\|P_{V_{w,v}}\Lambda_{w}\pi_{W_{w}}Q^{\ast}f\|.
\end{equation*}
Therefore
\begin{align*}
	\int_{\Omega}\int_{\Omega_{w}}\alpha_{w,v}^{2}\|P_{V_{w,v}}\Lambda_{w}\pi_{QW_{w}}f\|^{2}d\mu(v)d\mu(w)&\leq \|(Q^{-1})^{\ast}\|^{2}\int_{\Omega}\int_{\Omega_{w}}\alpha_{w,v}^{2}\|P_{V_{w,v}}\Lambda_{w}\pi_{W_{w}}Q^{\ast}f\|^{2}d\mu(v)d\mu(w)\\&\leq B\|Q^{\ast}\|^{2}\|(Q^{\ast})^{-1}\|^{2}\|f\|^{2}.
\end{align*}
Now show that
	\begin{equation*}
	\frac{QS_{\mathcal{R}}Q^{\ast}}{\|Q\|^{2}}\leq S_{\mathcal{R}_{Q}}\leq \|Q^{-1}\|^{2}QS_{\mathcal{R}}Q^{\ast}.
\end{equation*} 
For all $f\in H$, we have 
\begin{align*}
	\langle \frac{QS_{\mathcal{R}}Q^{\ast}}{\|Q\|^{2}}f,f\rangle&=\frac{1}{\|Q\|^{2}}\int_{\Omega}\int_{\Omega_{w}}\alpha_{w,v}^{2}\|P_{V_{w,v}}\Lambda_{w}\pi_{W_{w}}Q^{\ast}f\|^{2}d\mu(v)d\mu(w)\\&=\frac{1}{\|Q\|^{2}}\int_{\Omega}\int_{\Omega_{w}}\alpha_{w,v}^{2}\|P_{V_{w,v}}\Lambda_{w}\pi_{W_{w}}Q^{\ast}\pi_{QW_{w}}f\|^{2}d\mu(v)d\mu(w)\\&\leq \frac{\|Q^{\ast}\|^{2}}{\|Q\|^{2}}\int_{\Omega}\int_{\Omega_{w}}\alpha_{w,v}^{2}\|P_{w,v}\Lambda_{w}\pi_{QW_{w}}f\|^{2}d\mu(v)d\mu(w)\\&=\langle S_{\mathcal{R}_{Q}}f,f\rangle.
\end{align*}
Since 
\begin{equation*}
	\pi_{QW_{w}}=\pi_{QW_{w}}(Q^{-1})^{\ast}\pi_{W_{w}}Q^{\ast}.
\end{equation*}
Then
\begin{align*}
	\langle S_{\mathcal{R}_{Q}}f,f\rangle&=\int_{\Omega}\int_{\Omega_{w}}\alpha_{w,v}^{2}\|P_{V_{w,v}}\Lambda_{w}\pi_{QW_{w}}f\|^{2}d\mu(v)d\mu(w)\\&=\int_{\Omega}\int_{\Omega_{w}}\alpha_{w,v}^{2}\|P_{V_{w,v}}\Lambda_{w}\pi_{QW_{w}}(Q^{-1})^{\ast}\pi_{W_{w}}Q^{\ast}f\|^{2}d\mu(v)d\mu(w)\\&\leq \|(Q^{-1})^{\ast}\|^{2}\int_{\Omega}\int_{\Omega_{w}}\alpha_{w,v}^{2}\|P_{V_{w,v}}\Lambda_{w}\pi_{W_{w}}Q^{\ast}f\|^{2}d\mu(v)d\mu(w)\\&=\|(Q^{-1})^{\ast}\|^{2}\langle S_{\mathcal{R}_{Q}}Q^{\ast}f,Q^{\ast}f\rangle\\&=\langle \|(Q^{-1})^{\ast}\|^{2}QS_{\mathcal{R}_{Q}}Q^{\ast}f,f\rangle
\end{align*}
\end{proof}
\begin{theorem}
	Let $\mathcal{R}$ be an continuous relay fusion frame for $H$ with continuous relay fusion frame bounds $A$ and $B$. If $Q_{w}\in B(K_{w})$ are invertible operators for each $w\in \Omega$, then $\mathcal{R}=\{W_{w},Q_{w}V_{w,v},\Lambda_{w},\alpha_{w,v}\}_{w\in\Omega,v\in\Omega_{w}}$ is an continuous relay fusion frame for $H$.
\end{theorem}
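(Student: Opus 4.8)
The plan is to transfer the frame inequality of $\mathcal{R}$ to the family $\{W_w,Q_wV_{w,v},\Lambda_w,\alpha_{w,v}\}$ by comparing the two orthogonal projections $P_{V_{w,v}}$ and $P_{Q_wV_{w,v}}$ through the intertwining lemma recalled above. Since each $Q_w$ is bounded with bounded inverse, $Q_wV_{w,v}$ is again a closed subspace of $K_w$ (if $Q_wv_n\to y$ then $v_n=Q_w^{-1}(Q_wv_n)\to Q_w^{-1}y\in V_{w,v}$), so $P_{Q_wV_{w,v}}$ is well defined, and conditions (1)--(3) for the new family follow from those of $\mathcal{R}$ together with the measurable dependence of $Q_w$ on $w$. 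The analytic tool is a pair of operator identities: applying the intertwining lemma with $A=Q_w$ to $V_{w,v}$ gives $\pi_{V_{w,v}}Q_w^{\ast}=\pi_{V_{w,v}}Q_w^{\ast}\pi_{Q_wV_{w,v}}$, while applying it with $A=Q_w^{-1}$ to $Q_wV_{w,v}$ (noting $\overline{Q_w^{-1}(Q_wV_{w,v})}=V_{w,v}$) and then multiplying on the right by $Q_w^{\ast}$ gives $\pi_{Q_wV_{w,v}}=\pi_{Q_wV_{w,v}}(Q_w^{-1})^{\ast}\pi_{V_{w,v}}Q_w^{\ast}$, since $(Q_w^{-1})^{\ast}Q_w^{\ast}=I$.

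Feeding $\Lambda_w\pi_{W_w}f$ into the second identity yields the upper comparison $\|P_{Q_wV_{w,v}}\Lambda_w\pi_{W_w}f\|\le\|Q_w^{-1}\|\,\|P_{V_{w,v}}Q_w^{\ast}\Lambda_w\pi_{W_w}f\|$, and the first identity yields the reverse comparison $\|P_{V_{w,v}}Q_w^{\ast}\Lambda_w\pi_{W_w}f\|\le\|Q_w\|\,\|P_{Q_wV_{w,v}}\Lambda_w\pi_{W_w}f\|$. Squaring, weighting by $\alpha_{w,v}^2$ and integrating over $\Omega_w$ and $\Omega$ therefore sandwiches the candidate frame sum $\int_{\Omega}\int_{\Omega_w}\alpha_{w,v}^2\|P_{Q_wV_{w,v}}\Lambda_w\pi_{W_w}f\|^2\,d\mu(v)\,d\mu(w)$ between multiples of $\int_{\Omega}\int_{\Omega_w}\alpha_{w,v}^2\|P_{V_{w,v}}Q_w^{\ast}\Lambda_w\pi_{W_w}f\|^2\,d\mu(v)\,d\mu(w)$, with the two proportionality constants governed by $\|Q_w^{-1}\|$ and $\|Q_w\|$.

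The \emph{main obstacle} is to control this last integral, and it is where I would spend the most care. What the frame hypothesis on $\mathcal{R}$ bounds from both sides is $\int\!\int\alpha_{w,v}^2\|P_{V_{w,v}}\Lambda_w\pi_{W_w}g\|^2$ for a genuine argument $g\in H$; here, however, the operator $Q_w^{\ast}$ sits between $P_{V_{w,v}}$ and $\Lambda_w$, and it cannot be absorbed into the global argument $f$ — in contrast with the preceding theorem, where the modification lived in $W_w$ and could be moved onto the substitution $f\mapsto Q^{\ast}f$. To close the argument I would therefore (i) impose the uniform operator bounds $\sup_{w}\|Q_w\|<\infty$ and $\sup_{w}\|Q_w^{-1}\|<\infty$, so that the proportionality constants survive the integration and yield bounds independent of $w$, and (ii) cancel the stray adjoint by running the comparison against the modified relay map $(Q_w^{-1})^{\ast}\Lambda_w$, for which $Q_w^{\ast}(Q_w^{-1})^{\ast}\Lambda_w=\Lambda_w$ and the two estimates collapse exactly onto the frame inequality for $\mathcal{R}$, giving $A'=A/\sup_{w}\|Q_w\|^{2}$ and $B'=B\,\sup_{w}\|Q_w^{-1}\|^{2}$. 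Securing this uniformity, hence the independence of $A'$ and $B'$ from $w$, is the decisive point of the proof.
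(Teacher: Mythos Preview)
Your diagnosis of the obstacle is correct and, crucially, is sharper than the paper's own argument. Both you and the paper use the intertwining lemma $\pi_{V}A^{\ast}=\pi_{V}A^{\ast}\pi_{\overline{AV}}$ to compare $P_{V_{w,v}}$ and $P_{Q_{w}V_{w,v}}$. Your comparison
\[
\frac{1}{\|Q_{w}\|}\,\|P_{V_{w,v}}Q_{w}^{\ast}\Lambda_{w}\pi_{W_{w}}f\|
\;\le\;\|P_{Q_{w}V_{w,v}}\Lambda_{w}\pi_{W_{w}}f\|
\;\le\;\|Q_{w}^{-1}\|\,\|P_{V_{w,v}}Q_{w}^{\ast}\Lambda_{w}\pi_{W_{w}}f\|
\]
is valid, but leaves the stray $Q_{w}^{\ast}$ between $P_{V_{w,v}}$ and $\Lambda_{w}$ that you cannot absorb into the global argument. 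The paper, by contrast, asserts the direct pointwise bounds
\[
\frac{1}{\|Q_{w}\|\,\|Q_{w}^{-1}\|}\,\|P_{V_{w,v}}\Lambda_{w}\pi_{W_{w}}f\|\le\|P_{Q_{w}V_{w,v}}\Lambda_{w}\pi_{W_{w}}f\|\le\|Q_{w}^{\ast}\|\,\|(Q_{w}^{-1})^{\ast}\|\,\|P_{V_{w,v}}\Lambda_{w}\pi_{W_{w}}f\|,
\]
and then integrates and takes $\min_{w}$, $\max_{w}$ (so it too implicitly assumes the uniform bounds you list under (i)). But these pointwise bounds are \emph{false} in general: take $K_{w}=\mathbb{C}^{2}$, $V_{w,v}=\operatorname{span}\{e_{1}\}$, $Q_{w}$ the coordinate swap, and $\Lambda_{w}\pi_{W_{w}}f=e_{1}$; then $P_{Q_{w}V_{w,v}}e_{1}=0$ while $P_{V_{w,v}}e_{1}=e_{1}$, killing the lower bound, and $\Lambda_{w}\pi_{W_{w}}f=e_{2}$ kills the upper one. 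The same example (with $H=\mathbb{C}$, $W_{w}=H$, $\Lambda_{w}c=(c,0)$) shows that the \emph{statement} itself fails without further hypotheses: $\mathcal{R}$ is a Parseval relay fusion frame, yet $P_{Q_{w}V_{w,v}}\Lambda_{w}\pi_{W_{w}}f=0$ for every $f$.

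So the gap you flag cannot be closed for the theorem as written. Your remedy (ii) does work, but note what it actually proves: with the relay map replaced by $(Q_{w}^{-1})^{\ast}\Lambda_{w}$ (equivalently, $\Lambda_{w}$ replaced by $Q_{w}\Lambda_{w}$ up to the same comparison), the stray adjoint cancels and the frame bounds of $\mathcal{R}$ transfer. That is a corrected \emph{statement}, not a proof of the given one; together with the uniform bounds in (i), it yields the frame $\{W_{w},\,Q_{w}V_{w,v},\,(Q_{w}^{-1})^{\ast}\Lambda_{w},\,\alpha_{w,v}\}$ with bounds $A/\sup_{w}\|Q_{w}\|^{2}$ and $B\,\sup_{w}\|Q_{w}^{-1}\|^{2}$.
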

\begin{proof}
	For all $f\in H$, we have 
	\begin{equation*}
		\|Q_{w}P_{V_{w,v}}\Lambda_{w}\pi_{W_{w}}f\|=\|P_{Q_{w}V_{w,v}}Q_{w}P_{V_{w,v}}\Lambda_{w}\pi_{W_{w}}f\|\leq \|Q_{w}\|\|P_{Q_{w}V_{w,v}}\Lambda_{w}\pi_{W_{w}}f\|.
	\end{equation*}
So
\begin{equation*}
	\frac{1}{\|Q_{w}\|\|Q_{w}^{-1}\|}\|P_{V_{w,v}}\Lambda_{w}\pi_{W_{w}}f\|\leq \|P_{Q_{w}V_{w,v}}\Lambda_{w}\pi_{W_{w}}f\|.
\end{equation*}
Therefore
\begin{align*}
	\min_{w\in\Omega}\biggl\{\frac{A}{\|Q_{w}\|^{2}\|Q_{w}^{-1}\|^{2}}\biggr\}\|f\|^{2}&\leq \int_{\Omega}\int_{\Omega_{w}}\frac{\alpha_{w,v}^{2}}{\|Q_{w}\|^{2}\|Q_{w}^{-1}\|^{2}}\|P_{V_{w,v}}\Lambda_{w}\pi_{W_{w}}f\|^{2}d\mu(v)d\mu(w)\\&\leq \int_{\Omega}\int_{\Omega_{w}}\alpha_{w,v}^{2}\|P_{Q_{w}V_{w,v}}\Lambda_{w}\pi_{W_{w}}f\|^{2}d\mu(v)d\mu(w).
\end{align*}
We have 
\begin{equation*}
	P_{Q_{w}V_{w,v}}=P_{Q_{w}V_{w,v}}(Q_{w}^{-1})^{\ast}P_{V_{w,v}}Q_{w}^{\ast}.
\end{equation*}
Then 
\begin{align*}
	\|P_{Q_{w}V_{w,v}}\Lambda_{w}\pi_{W_{w}}f\|&=\|P_{Q_{w}V_{w,v}}(Q_{w}^{-1})^{\ast}P_{V_{w,v}}Q_{w}^{\ast}\Lambda_{w}\pi_{W_{w}}\|\\&\leq \|Q^{\ast}_{w}\|\|(Q_{w}^{-1})^{\ast}\|\|P_{V_{w,v}}\Lambda_{w}\pi_{W_{w}}f\|.
\end{align*}
Hence
\begin{align*}
	&\int_{\Omega}\int_{\Omega_{w}}\alpha_{w,v}^{2}\|P_{Q_{w}V_{w,v}}\Lambda_{w}\pi_{W_{w}}f\|^{2}d\mu(v)d\mu(w)\|^{2}d\mu(v)d\mu(w)\\&\quad\quad\quad\leq\int_{\Omega}\int_{\Omega_{w}}\|Q_{w}^{\ast}\|^{2}\|(Q^{-1}_{w})^{\ast}\|^{2}\alpha_{w,v}^{2}\|P_{V_{w,v}}\Lambda_{w}\pi_{W_{w}}f\|^{2}d\mu(v)d\mu(w)\\&\quad\quad\quad\leq \max_{w\in\Omega}\biggl\{\|Q_{w}^{\ast}\|^{2}\|(Q^{-1}_{w})^{\ast}\|^{2}\biggr\}B\|f\|^{2}
\end{align*}
\end{proof}
\section{Perturbation of the continuous relay fusion frames }
\begin{theorem}
	Let $\mathcal{R}_{1}=\{W_{w},V_{w,v},\Lambda_{w},v_{w,v}\}_{w\in\Omega,v\in\Omega}$ be an continuous relay fusion frame for $H$ with continuous relay bounds $A$ and $B$. Suppose that $\{Z_{w,v}\}_{v\in\Omega_{w}}$ is a family of closed subspaces in $K_{w}$ for each $w\in\Omega$ and there exist constants $C,D,\epsilon$ such that $\max\{C+\frac{\epsilon}{\sqrt{A}},D\}<1$ and for all $f\in H$
	\begin{align*}
		\biggl(\int_{\Omega}\int_{\Omega_{w}}&\alpha_{w,v}^{2}\|P_{V_{w,v}}\Lambda_{w}\pi_{W_{w}}f-P_{Z_{w,v}}\Lambda_{w}\pi_{W_{w}}f\|^{2}d\mu(v)d\mu(w)\biggr)^{\frac{1}{2}}\\&\leq C\biggl(\int_{\Omega}\int_{\Omega_{w}}\alpha_{w,v}^{2}\|P_{V_{w,v}}\Lambda_{w}\pi_{W_{w}}f\|^{2}d\mu(v)d\mu(w)\biggr)^{\frac{1}{2}}\\&\quad+D\biggl(\int_{\Omega}\int_{\Omega_{w}}\alpha_{w,v}^{2}\|P_{Z_{w,v}}\Lambda_{w}\pi_{W_{w}}f\|^{2}d\mu(v)d\mu(w)\biggr)^{\frac{1}{2}}+\epsilon \|f\|.
	\end{align*}
Then $\mathcal{R}_{2}=\{W_{w},Z_{w,v},\Lambda_{w},\alpha_{w,v}\}_{w\in\Omega,v\in\omega}$ is an continuous relay fusion frame for $H$ with continuous relay fusion frame bounds
\begin{equation*}
	A\biggl(\frac{1-C-\frac{\epsilon}{\sqrt{A}}}{1+D}\biggr)^{2},\quad B\biggl(\frac{1+A+\frac{\epsilon}{\sqrt{B}}}{1-D}\biggr)^{2}.
\end{equation*}
\end{theorem}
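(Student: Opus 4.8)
The plan is to apply the triangle inequality in the Hilbert space $\left(\int_{\Omega}\int_{\Omega_w}\oplus V_{w,v}\right)_{l^2}$-type norm that measures the relay fusion frame sum for $\mathcal{R}_2$, comparing it against the corresponding sum for $\mathcal{R}_1$, and then absorb the perturbation terms using the hypothesis $\max\{C+\frac{\epsilon}{\sqrt A},D\}<1$. Write $\Phi_V(f)=\left(\int_{\Omega}\int_{\Omega_w}\alpha_{w,v}^2\|P_{V_{w,v}}\Lambda_w\pi_{W_w}f\|^2\,d\mu(v)d\mu(w)\right)^{1/2}$ and $\Phi_Z(f)$ analogously with $Z_{w,v}$ in place of $V_{w,v}$; the hypothesis reads $\Phi_V(f)-\Phi_Z(f)\le\|\Phi_V(\cdot)-\Phi_Z(\cdot)\|\le C\,\Phi_V(f)+D\,\Phi_Z(f)+\epsilon\|f\|$ in the appropriate integrated sense (using Minkowski's inequality to pass from the integrated difference to $|\Phi_V(f)-\Phi_Z(f)|$).

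For the lower bound, I would start from $\Phi_Z(f)\ge\Phi_V(f)-\bigl(\text{perturbation term}\bigr)\ge\Phi_V(f)-C\Phi_V(f)-D\Phi_Z(f)-\epsilon\|f\|$, rearrange to get $(1+D)\Phi_Z(f)\ge(1-C)\Phi_V(f)-\epsilon\|f\|$, and then use the lower frame bound of $\mathcal{R}_1$ in the form $\Phi_V(f)\ge\sqrt A\,\|f\|$ to write $(1-C)\Phi_V(f)-\epsilon\|f\|\ge(1-C)\Phi_V(f)-\frac{\epsilon}{\sqrt A}\Phi_V(f)=\bigl(1-C-\tfrac{\epsilon}{\sqrt A}\bigr)\Phi_V(f)\ge\bigl(1-C-\tfrac{\epsilon}{\sqrt A}\bigr)\sqrt A\,\|f\|$. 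Dividing by $1+D$ and squaring yields $\Phi_Z(f)^2\ge A\left(\frac{1-C-\epsilon/\sqrt A}{1+D}\right)^2\|f\|^2$, which is the claimed lower bound; the condition $C+\frac{\epsilon}{\sqrt A}<1$ guarantees this constant is positive, and $D<1$ keeps the denominator controlled.

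For the upper bound, symmetrically, $\Phi_Z(f)\le\Phi_V(f)+C\Phi_V(f)+D\Phi_Z(f)+\epsilon\|f\|$ gives $(1-D)\Phi_Z(f)\le(1+C)\Phi_V(f)+\epsilon\|f\|$. Here one uses the upper frame bound $\Phi_V(f)\le\sqrt B\,\|f\|$ on the $\epsilon\|f\|$ term, $\epsilon\|f\|\le\frac{\epsilon}{\sqrt B}\Phi_V(f)$ — wait, that inequality goes the wrong way, so instead bound $\epsilon\|f\|$ directly and $(1+C)\Phi_V(f)\le(1+C)\sqrt B\|f\|$, then combine as $(1-D)\Phi_Z(f)\le\bigl((1+C)\sqrt B+\epsilon\bigr)\|f\|=\sqrt B\bigl(1+C+\tfrac{\epsilon}{\sqrt B}\bigr)\|f\|$; matching the paper's stated constant requires reading its "$A$" in the numerator as a typo for "$C$". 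Squaring and dividing by $(1-D)^2$ gives $\Phi_Z(f)^2\le B\left(\frac{1+C+\epsilon/\sqrt B}{1-D}\right)^2\|f\|^2$. Finally I would note that strong measurability of $\{P_{Z_{w,v}}\Lambda_w\pi_{W_w}f\}$ is inherited from the hypothesis (the difference with the measurable family for $\mathcal{R}_1$ is controlled, hence measurable), so $\mathcal{R}_2$ satisfies all the axioms of a continuous relay fusion frame.

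The main obstacle I anticipate is bookkeeping the direction of the auxiliary inequalities $\Phi_V(f)\gtrless\sqrt{A},\sqrt{B}\,\|f\|$ correctly so that the $\epsilon$-term is absorbed on the right side in each case; a secondary subtlety is justifying the passage from the integrated perturbation hypothesis to a genuine triangle inequality $|\Phi_Z(f)-\Phi_V(f)|\le(\text{RHS})$ via Minkowski's inequality in $L^2(\Omega\times\Omega_w,\alpha_{w,v}^2\,d\mu\,d\mu)$, which is what legitimizes the rearrangements above.
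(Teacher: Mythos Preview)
Your proposal is correct and follows essentially the same route as the paper: Minkowski's inequality gives $|\Phi_V(f)-\Phi_Z(f)|\le(\text{integrated difference})$, the hypothesis bounds the right side, and the frame inequality $\sqrt{A}\,\|f\|\le\Phi_V(f)$ absorbs the $\epsilon\|f\|$ term before rearranging and squaring. Your reading of the stated upper constant as a typo ($C$ in place of $A$) is correct, and your side remarks on measurability and on the direction of the $\epsilon$-absorption for the upper bound are more careful than the paper, which simply writes ``Similarly we can prove that'' for that half.
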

\begin{proof}
	We have for each $f\in H$
	\begin{align*}
		\biggl(\int_{\Omega}\int_{\Omega_{w}}&\alpha_{w,v}^{2}\|P_{V_{w,v}}\Lambda_{w}\pi_{W_{w}}f\|^{2}d\mu(v)d\mu(w)\biggr)^{\frac{1}{2}}-\biggl(\int_{\Omega}\int_{\Omega_{w}}\alpha_{w,v}^{2}\|P_{Z_{w,v}}\Lambda_{w}\pi_{W_{w}}f\|^{2}d\mu(v)d\mu(w)\biggr)^{\frac{1}{2}}\\&\leq \biggl(\int_{\Omega}\int_{\Omega_{w}}\alpha_{w,v}^{2}\|P_{V_{w,v}}\Lambda_{w}\pi_{W_{w}}f-P_{Z_{w,v}}\Lambda_{w}\pi_{W_{w}}f\|^{2}d\mu(v)d\mu(w)\biggr)^{\frac{1}{2}}\\&\leq \bigg(C+\frac{\epsilon}{\sqrt{A}}\bigg)\biggl(\int_{\Omega}\int_{\Omega_{w}}\alpha_{w,v}^{2}\|P_{V_{w,v}}\Lambda_{w}\pi_{W_{w}}f\|^{2}d\mu(v)d\mu(w)\biggr)^{\frac{1}{2}}\\&\quad \quad+D\biggl(\int_{\Omega}\int_{\Omega_{w}}\alpha_{w,v}^{2}\|P_{Z_{w,v}}\Lambda_{w}\pi_{W_{w}}f\|^{2}d\mu(v)d\mu(w)\biggr)^{\frac{1}{2}}.
	\end{align*}
Then 
\begin{equation*}
A\biggl(\frac{1-C-\frac{\epsilon}{\sqrt{A}}}{1+D}\biggr)^{2}\|f\|^{2}\leq \int_{\Omega}\int_{\Omega_{w}}\alpha_{w,v}^{2}\|P_{Z_{w,v}}\Lambda_{w}\pi_{W_{w}}f\|^{2}d\mu(v)d\mu(w).
\end{equation*}
Similarly we can prove that 
\begin{equation*}
	B\biggl(\frac{1+C+\frac{\epsilon}{\sqrt{B}}}{1-D}\biggr)^{2}\|f\|^{2}\geq \int_{\Omega}\int_{\Omega_{w}}\alpha_{w,v}^{2}\|P_{Z_{w,v}}\Lambda_{w}\pi_{W_{w}}f\|^{2}d\mu(v)d\mu(w).
\end{equation*}
\end{proof}
\begin{theorem}
	Let $\mathcal{R}_{1}=\{W_{w},V_{w,v},\Lambda_{w},\alpha_{w,v}\}_{w\in\Omega,v\in\Omega_{w}}$ be an continuous relay fusion frame for $H$ with continuous relay fusion frame bounds $A$ and $B$. Suppose that $\{Z_{w,v}\}_{v\in\Omega_{w}}$ is a family of closed subspaces in $K_{w}$ for each $w\in\Omega$ for each $w\in\Omega$ and there exists a constant $0<C<A$ such that 
	\begin{equation*}
		\int_{\Omega}\int_{\Omega_{w}}\alpha_{w,v}^{2}\|P_{V_{w,v}}\Lambda_{w}\pi_{W_{w}}f-P_{Z_{w,v}}\Lambda_{w}\pi_{W_{w}}f\|^{2}d\mu(v)d\mu(w)\leq C\|f\|^{2},\quad\forall f\in H.
	\end{equation*}
Then $\mathcal{R}_{2}=\{W_{w},Z_{w,v},\Lambda_{w},\alpha_{w,v}\}_{w\in\Omega,v\in\Omega_{w}}$ is an continuous relay fusion frame for $H$ with continuous relay fusion frame bounds 
\begin{equation*}
	\sqrt{C}-\sqrt{A},\quad \quad \sqrt{C}+\sqrt{B}.
\end{equation*}
\end{theorem}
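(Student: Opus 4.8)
The plan is to run the classical Paley--Wiener perturbation argument inside the direct integral Hilbert space $\left(\int_{\Omega}\int_{\Omega_w}\oplus K_{w}\right)_{l^2}$. For $f\in H$ set
\[
N_V(f):=\left(\int_{\Omega}\int_{\Omega_w}\alpha_{w,v}^2\|P_{V_{w,v}}\Lambda_w\pi_{W_w}f\|^2\,d\mu(v)\,d\mu(w)\right)^{1/2},
\]
and define $N_Z(f)$ analogously with $Z_{w,v}$ in place of $V_{w,v}$. By the continuous relay fusion frame hypothesis on $\mathcal R_1$ we have $\sqrt A\,\|f\|\le N_V(f)\le\sqrt B\,\|f\|$ (this is $\|T_{\mathcal R_1}f\|$, after the identification in \eqref{eq1}), while the perturbation hypothesis says precisely that the $L^2$-norm of the difference sequence $\{\alpha_{w,v}(P_{V_{w,v}}-P_{Z_{w,v}})\Lambda_w\pi_{W_w}f\}_{w\in\Omega,v\in\Omega_w}$ is at most $\sqrt C\,\|f\|$.

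First I would dispose of the measurability requirements (1)--(3) for $\mathcal R_2$: conditions (1) and (2) are identical to those for $\mathcal R_1$ since $W_w$ and $\Lambda_w$ are unchanged, and (3) is the standing assumption that $\{Z_{w,v}\}$ is an admissible family of closed subspaces (needed already for the integral in the hypothesis to be defined). Next, I would apply the triangle inequality in $\left(\int_{\Omega}\int_{\Omega_w}\oplus K_{w}\right)_{l^2}$: since all three sequences involved belong to this space (square-integrability follows from the Bessel bound for $\mathcal R_1$ together with the perturbation hypothesis), one obtains, for every $f\in H$,
\[
\big|\,N_Z(f)-N_V(f)\,\big|\le\sqrt C\,\|f\|.
\]
Combining this with the two-sided bounds on $N_V$ gives $(\sqrt A-\sqrt C)\|f\|\le N_Z(f)\le(\sqrt B+\sqrt C)\|f\|$, where $\sqrt A-\sqrt C>0$ precisely because $C<A$; squaring yields
\[
(\sqrt A-\sqrt C)^2\|f\|^2\le\int_{\Omega}\int_{\Omega_w}\alpha_{w,v}^2\|P_{Z_{w,v}}\Lambda_w\pi_{W_w}f\|^2\,d\mu(v)\,d\mu(w)\le(\sqrt B+\sqrt C)^2\|f\|^2,
\]
which is exactly the statement that $\mathcal R_2$ is a continuous relay fusion frame with the claimed bounds (read, as in the preceding theorem, up to the squaring convention).

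There is no substantial obstacle in this argument; the only point needing care is the legitimacy of the reverse triangle inequality, namely checking that $f\mapsto\{\alpha_{w,v}P_{Z_{w,v}}\Lambda_w\pi_{W_w}f\}_{w\in\Omega,v\in\Omega_w}$ does land in the direct integral space and that the difference of the two analysis-type sequences is precisely the sequence appearing in the hypothesis. Both follow from the Bessel estimates already established (cf. Lemma \ref{lem1}) and from applying $P_{V_{w,v}}-P_{Z_{w,v}}$ pointwise in $(w,v)$, after which everything reduces to routine bookkeeping with the constants.
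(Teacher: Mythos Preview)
Your proposal is correct and follows essentially the same route as the paper: both arguments apply the (reverse) triangle inequality in the direct integral space to the analysis-type sequences, combine the resulting bound $|N_Z(f)-N_V(f)|\le\sqrt{C}\,\|f\|$ with the frame bounds $\sqrt{A}\,\|f\|\le N_V(f)\le\sqrt{B}\,\|f\|$, and square. If anything, you are more careful than the paper in flagging the measurability and square-integrability prerequisites and in noting that the stated bounds must be read as $(\sqrt{A}-\sqrt{C})^2$ and $(\sqrt{B}+\sqrt{C})^2$.
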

\begin{proof}
	For each $f\in H$, we have 
	\begin{align*}
		\|\{\alpha_{w,v}&P_{Z_{w,v}}\Lambda_{w}\pi_{W_{w}}f\}_{w\in\Omega,v\in\Omega_{w}}\|\\&\leq \|\{\alpha_{w,v}P_{V_{w,v}}\Lambda_{w}\pi_{W_{w}}f-\alpha_{w,v}P_{Z_{w,v}}\Lambda_{w}\pi_{W_{w}}f\}_{w\in\Omega,v\in\Omega_{w}}\|+\|\{\alpha_{w,v}P_{V_{w,v}}\Lambda_{w}\pi_{W_{w}}f\}_{w\in\Omega,v\in\Omega_{w}}\|\\&\leq (\sqrt{C}+\sqrt{B})\|f\|,
	\end{align*}
then
\begin{equation*}
	\int_{\Omega}\int_{\Omega_{w}}\alpha_{w,v}^{2}\|P_{Z_{w,v}}\Lambda_{w}\pi_{W_{w}}f\|^{2}d\mu(v)d\mu(w)\leq (\sqrt{C}+\sqrt{B})^{2}\|f\|^{2}.
\end{equation*}
Similarly we have for each $f\in H$
\begin{equation*}
\int_{\Omega}\int_{\Omega_{w}}\alpha_{w,v}^{2}\|P_{Z_{w,v}}\Lambda_{w}\pi_{W_{w}}f\|^{2}d\mu(v)d\mu(w)\geq (\sqrt{C}-\sqrt{A})^{2}\|f\|^{2}.	
\end{equation*}

\end{proof}

		\section{Acknowledgments}
	It is our great pleasure to thank the referee for his careful reading of the paper and for several helpful suggestions.
	
	\medskip
	
	\section*{Declarations}
	
	\medskip
	
	\noindent \textbf{Availablity of data and materials}\newline
	\noindent Not applicable.
	
	\medskip

	\noindent \textbf{Competing  interest}\newline
	\noindent The authors declare that they have no competing interests.

	\medskip
	
	\noindent \textbf{Fundings}\newline
	\noindent  Authors declare that there is no funding available for this article.

	\medskip
	
	\noindent \textbf{Authors' contributions}\newline
	\noindent The authors equally conceived of the study, participated in its
	design and coordination, drafted the manuscript, participated in the
	sequence alignment, and read and approved the final manuscript. 
	
	\medskip
	\bibliographystyle{amsplain}

\end{document}